\newtheorem{theorem}{Theorem}[section]
\newtheorem{lemma}[theorem]{Lemma}
\newtheorem{corollary}[theorem]{Corollary}
\theoremstyle{definition}
\newtheorem{definition}[theorem]{Definition}
\newtheorem{example}[theorem]{Example}
\newtheorem{prop}[theorem]{Proposition}
\theoremstyle{remark}
\newtheorem{remark}[theorem]{Remark}
\numberwithin{equation}{section}
\begin{document}

\setcounter{page}{1}

\title[On some geometric properties of operator spaces]{On some geometric properties of operator spaces}

\author[Arpita Mal, Debmalya Sain \MakeLowercase{and} Kallol Paul]{Arpita Mal,$^1$ Debmalya Sain,$^2$ \MakeLowercase{and} Kallol Paul$^1$$^{*}$}

\address{$^{1}$Department of Mathematics, Jadavpur University, Kolkata 700032, West Bengal, India.}
\email{\textcolor[rgb]{0.00,0.00,0.84}{arpitamalju@gmail.com;
kalloldada@gmail.com}}

\address{$^{2}$Department of Mathematics, Indian Institute of Science, Bengaluru 560012, Karnataka, India.}
\email{\textcolor[rgb]{0.00,0.00,0.84}{saindebmalya@gmail.com}}


\let\thefootnote\relax\footnote{Research of the first author is supported by UGC, Govt. of India. The research of Dr. Debmalya Sain is sponsored by Dr. D. S. Kothari Postdoctoral Fellowship, under the mentorship of Professor Gadadhar Misra. Dr. Debmalya Sain feels elated to acknowledge the joyous presence of the lovable couple, Mr. Partha Chowdhury and Mrs. Trina Das, in his life! The research of the  third author is supported by project MATRICS (Ref. no. MTR/2017/000059), SERB, DST, Govt. of India. }

\subjclass[2010]{Primary 46B20; Secondary 47L05.}

\keywords{norm parallelism, orthogonality, semi-rotund, norm attainment.}

\date{Received: xxxxxx; Revised: yyyyyy; Accepted: zzzzzz.
\newline \indent $^{*}$Corresponding author}

\begin{abstract}
In this paper we study some geometric properties like  parallelism, orthogonality and semi-rotundity in the space of bounded linear operators. We completely characterize parallelism of two compact linear operators between normed linear spaces $\mathbb{X} $ and $\mathbb{Y}$, assuming $\mathbb{X}$ to be reflexive. We also characterize parallelism of two bounded linear operators between normed linear spaces $\mathbb{X} $ and $\mathbb{Y}.$ We  investigate  parallelism and approximate parallelism in the space of bounded linear operators defined on a Hilbert space. Using the characterization of operator parallelism, we study Birkhoff-James orthogonality in the space of compact linear operators as well as bounded linear operators. Finally, we introduce the concept of semi-rotund points (semi-rotund spaces)   which generalizes the notion of exposed points (strictly convex spaces). We further study  semi-rotund operators and prove that $\mathbb{B}(\mathbb{X},\mathbb{Y})$ is a semi-rotund space which is not strictly convex, if $\mathbb{X},\mathbb{Y}$ are finite-dimensional Banach spaces and $\mathbb{Y}$ is strictly convex.  
\end{abstract}

\maketitle
\section{Introduction.} 

In this paper, letters $ \mathbb{X}, \mathbb{Y} $ denote normed linear spaces and $\mathbb{H}$ denotes a Hilbert space over the field $K\in\{\mathbb{C},~\mathbb{R}\}$. Also $\mathbb{X}^*$ denotes the dual space of $\mathbb{X}$.  Let $B_{\mathbb{X}} = \{x \in \mathbb{X} \colon \|x\| \leq 1\}$ and
$S_{\mathbb{X}} = \{x \in \mathbb{X} \colon \|x\|=1\}$ be the unit ball
and the unit sphere of $\mathbb{X}$ respectively. Let $\mathbb{B}(\mathbb{X}, \mathbb{Y})~ (\mathbb{K}(\mathbb{X}, \mathbb{Y})) $ denote the space of all bounded (compact) linear operators from $ \mathbb{X} $ to $ \mathbb{Y}. $  We write $\mathbb{B}(\mathbb{X}, \mathbb{Y})=\mathbb{B}(\mathbb{X}) ~ (\mathbb{K}(\mathbb{X}, \mathbb{Y})=\mathbb{K}(\mathbb{X})),$ if $\mathbb{X}=\mathbb{Y}$. For $x,y \in \mathbb{X}$, $x$ is said to be orthogonal to $y$ in the sense of Birkhoff-James \cite{B}, written as $x \perp_B y$, if $\|x\| \leq \|x+\lambda y\|$ for all $\lambda \in K.$ Moreover, $x$ is said to be norm-parallel \cite{Se}  to $y$, written as $x\parallel y$,   if $ \| x+\lambda y \|=\|x\|+\|y\|$ for some $\lambda \in \mathbb{T} $, where $\mathbb{T}=\{\lambda \in K: |\lambda|=1 \}$. We would like to note that in the context of a Banach space, Birkhoff-James orthogonality is homogeneous but not symmetric \cite{GSP}, whereas norm-parallelism is both  symmetric and $\mathbb{R}-$homogeneous. The notion of Birkhoff-James orthogonality coincides with inner product orthogonality if the underlying space is a Hilbert space. Furthermore, two elements of a Hilbert space are norm-parallel if and only if they are linearly dependent. In case of normed linear spaces, two linearly dependent vectors are norm-parallel, but the converse is not true in general. In $\ell_{\infty}^2$, $(1,1)$ and $(1,0)$ are norm-parallel but linearly independent. In \cite{PSJ}, Paul et al. studied the notion of strong orthogonality. For $x,y \in \mathbb{X}$, $x$ is said to be
 strongly orthogonal to $y$ in the sense of Birkhoff-James, written as $x \perp_{SB} y$, if $\|x\| < \|x+\lambda y\|$ for all $\lambda \in K\setminus \{0\}$. \\
 Birkhoff-James orthogonality plays a very important role  in the study of geometry of Banach spaces. It has been explored in various settings by many other mathematicians \cite{AR,BFS,BS,LS,BG}. Furthermore,  various generalizations of it have also been considered. Dragomir \cite{D}  defined approximate Birkhoff-James orthogonality  as follows:\\
Let $\epsilon \in [0,1)$ and $x,y\in \mathbb{X}$. Then $x$ is said to be approximate Birkhoff-James orthogonal to $y$ if $ \|x+\lambda y\|\geq (1-\epsilon) \|x\|~\forall ~\lambda \in \mathbb{K}.$\\
Later on, Chmieli\'nski \cite{C} slightly modified the definition given by Dragomir and defined approximate Birkhoff-James orthogonality  as follows:
\[x\perp_D^{\epsilon}y \Longleftrightarrow \|x+\lambda y\|\geq \sqrt{1-\epsilon^2} \|x\|~\forall ~\lambda \in \mathbb{K}.\]
Motivated by the notion of approximate Birkhoff-James orthogonality, Moslehian and Zamani \cite{MZ} introduced the notion  of approximate parallelism($\epsilon$-parallelism) in the setting of normed linear space. For $x,y \in \mathbb{X}$ and $ \epsilon \in [0,1),$ $  x$ is said to be approximate parallel to $y$, written as, $x \parallel^{\epsilon} y$, if $\inf\{\|x+\mu y\|:\mu \in K \}\leq \epsilon \|x\|.$ We would like to remark here that in general in a normed linear space, $x \parallel^{\epsilon} y$ with $\epsilon=0$ implies that $x\parallel y$ but not the other way round. As for example, in $\ell_{\infty}^2,~(1,1)\parallel (1,0)$ but $(1,1)\not\parallel^{\epsilon}(1,0)$ with $\epsilon=0.$\\

We will see later that the norm attainment set of a bounded linear operator plays an important role in the study of norm-parallelism and Birkhoff-James orthogonality of bounded linear operators between Banach spaces.
For a bounded linear operator  $T \in \mathbb{B}(\mathbb{X}, \mathbb{Y})$, we define $M_T$ to be the set of all unit vectors in $S_\mathbb{X}$
at which $T$ attains norm, i.e.,
\[
M_T = \{ x \in S_\mathbb{X} \colon \|Tx\| = \|T\| \}.
\] 

The purpose of this paper is to study norm-parallelism and Birkhoff-James orthogonality in the space of bounded  linear operators, from the point of view of operator norm attainment. Bottazzi et al.\cite[Th. 4.24]{BCMWZ} proved that if $\mathbb{X}$ is a reflexive Banach space, $\mathbb{Y}$ is a smooth, strictly convex Banach space, $T,A \in \mathbb{K}(\mathbb{X},\mathbb{Y})$ and $M_{T}$ is either connected or $M_{T}=\{\pm u\}$ for some unit vector $u \in \mathbb{X},$ then $T \parallel A$ if and only if there exists a vector $x\in M_{T}\cap M_{A}$ such that $T x \parallel Ax$. In this paper, we substantially improve upon the result to show that there is no necessity to put additional restrictions on the norm attainment set $M_T$ and the codomain space $\mathbb{Y}.$ We next give a complete characterization of norm-parallelism of bounded linear operators defined between any two normed linear spaces.

We further study and completely characterize norm parallelism in the space of bounded linear operators on a Hilbert space $\mathbb{H}.$ We also obtain a necessary condition for approximate parallelism in $\mathbb{B}(\mathbb{H})$ and provide an example to illustrate the subtle difference between norm parallelism of operators ($T\parallel A$) and approximate operator parallelism ($T\parallel^{\epsilon}A,$ with $\epsilon=0$).
\\

In section 3, we explore Birkhoff-James orthogonality in the space of bounded linear operators by classifying them into two exclusive and exhaustive categories. We first study the case $``T\perp_{SB}A$'' and obtain a characterization of strong Birkhoff-James orthogonality in the space of bounded linear operators between finite-dimensional normed linear spaces. We next study the case ``$T\perp_B A$ but $T\not\perp_{SB} A$'' and obtain a necessary condition for the same.

Motivated by the operator theoretic results involving Birkhoff-James orthogonality and strong Birkhoff-James orthogonality, we introduce a new geometric notion, that of a semi-rotund point, defined in the following way:
\begin{definition}[semi-rotund point]
Let $\mathbb{X}$ be a normed linear space. An element  $ \theta \neq x\in \mathbb{X}$ is said to be a  semi-rotund point of $\mathbb{X}$ if there exists $y\in \mathbb{X}$ such that $x\perp_{SB}y$.
\end{definition}
\begin{definition}
A normed linear space $\mathbb{X}$ is said to be a semi-rotund space if for each non-zero $x\in \mathbb{X}$, $x$ is a semi-rotund point.
\end{definition} 
Clearly, every exposed point of the unit ball of a normed linear space is a semi-rotund point. However, it is interesting to observe that the converse is not true if the dimension of the space is greater than two. In $\ell_{\infty}^{3}$, the point $(1,1,0)$ is a semi-rotund point but not an exposed point of the unit ball. It is also immediate that a strictly convex space is a semi-rotund space but the converse is not necessarily true. For example, consider the space $\mathbb{X}$, where $S_{\mathbb{X}}=\{(x,y,z)\in \mathbb{R}^3:x^2+y^2=1,|z|\leq 1\} \cup \{(x,y,z)\in \mathbb{R}^3:x^2+y^2+(z-1)^2=1,~1\leq z\leq 2\} \cup \{(x,y,z)\in \mathbb{R}^3:x^2+y^2+(z+1)^2=1,~-2\leq z\leq -1\} $. In this case, it is easy to verify that every non-zero element in $\mathbb{X}$ is a semi-rotund point but not every point of $ S_{\mathbb{X}} $ is an exposed point of $ B_{\mathbb{X}} $. We note that the notions of exposed point (strictly convex space) and semi-rotund point (semi-rotund space) are equivalent if the space is two-dimensional. Continuing our study of semi-rotund points, we prove that every non-zero compact linear operator from a reflexive Banach space to a strictly convex Banach space is a semi-rotund point of the corresponding operator space. Finally, we show that $\mathbb{B}(\mathbb{X},\mathbb{Y})$ is a semi-rotund space which is not strictly convex, if $\mathbb{X}$ and $\mathbb{Y}$ are finite-dimensional Banach spaces and in addition, $\mathbb{Y}$ is strictly convex. In particular, this illustrates that the concept of semi-rotundity is a proper generalization of the concept of strict convexity. It is well-known that several convexity conditions such as uniform convexity and local uniform rotundity, strictly stronger than that of strict convexity, are of great importance in the study of the geometry of normed linear spaces. We would like to end this section with the remark that to the best of our knowledge, this is the first instance of a convexity property in normed linear spaces which is strictly weaker than strict convexity.    

\section{Norm-parallelism of bounded linear operators}
We begin this section with an easy proposition on approximate parallelism.   
\begin{prop}
	Let $\mathbb{X},~\mathbb{Y}$ be two normed linear spaces. Let $T \in\mathbb{B}({\mathbb{X},\mathbb{Y}})$ and $x \in M_T$. Then for any $\epsilon \in [0,1)$ and any $y \in \mathbb{X}$, $x \parallel^{\epsilon} y \Longrightarrow Tx \parallel^{\epsilon} Ty.$ 
\end{prop}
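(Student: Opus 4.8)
The plan is to unpack the definition of approximate parallelism and push the inequality through the operator $T$ by exploiting the fact that $x \in M_T$. Recall that $x \parallel^{\epsilon} y$ means $\inf\{\|x+\mu y\| : \mu \in K\} \leq \epsilon \|x\|$, and we must show $\inf\{\|Tx + \mu Ty\| : \mu \in K\} \leq \epsilon \|Tx\|$. The natural idea is to take an arbitrary $\mu \in K$, apply $T$ to the vector $x + \mu y$, and use linearity to write $T(x+\mu y) = Tx + \mu Ty$.

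First I would record the two elementary estimates. On one hand, boundedness gives $\|Tx + \mu Ty\| = \|T(x+\mu y)\| \leq \|T\|\,\|x+\mu y\|$ for every $\mu$. On the other hand, since $x \in M_T$ we have the key identity $\|Tx\| = \|T\|\,\|x\|$, which is precisely what converts a bound involving $\|T\|$ and $\|x\|$ into the desired bound involving $\|Tx\|$. Combining these, for each $\mu$ we get
\[
\|Tx+\mu Ty\| \leq \|T\|\,\|x+\mu y\|.
\]
Taking the infimum over $\mu \in K$ on both sides and using $x \parallel^{\epsilon} y$,
\[
\inf_{\mu \in K}\|Tx+\mu Ty\| \leq \|T\| \inf_{\mu \in K}\|x+\mu y\| \leq \|T\|\,\epsilon\,\|x\| = \epsilon\,\|T\|\,\|x\| = \epsilon\,\|Tx\|,
\]
where the last equality is exactly the norm attainment condition $\|Tx\| = \|T\|\,\|x\|$. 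This is precisely the statement $Tx \parallel^{\epsilon} Ty$.

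There is essentially no main obstacle here, which is consistent with the authors calling this an easy proposition; the only point requiring care is the degenerate case $T = \theta$ (the zero operator), where $M_T = S_{\mathbb{X}}$ and $Tx = Ty = \theta$, so the conclusion holds trivially since both sides of the defining inequality vanish. I would also remark that the argument works verbatim for the endpoint value $\epsilon = 0$ and hence recovers, as a special case, the implication $x \parallel y \Rightarrow Tx \parallel Ty$ whenever the relevant infimum is attained, though one should note the subtlety flagged in the introduction that $\parallel^{0}$ and $\parallel$ need not coincide in a general normed space. The essential content is just the interplay between submultiplicativity of the operator norm and the norm attainment equality, with the infimum monotonically preserved under multiplication by the nonnegative constant $\|T\|$.
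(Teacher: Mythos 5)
Your proof is correct and is essentially identical to the paper's: both use linearity and boundedness to get $\|Tx+\mu Ty\|\leq\|T\|\,\|x+\mu y\|$, take the infimum over $\mu\in K$, and then invoke the norm attainment identity $\|Tx\|=\|T\|\,\|x\|$ from $x\in M_T$ to conclude $\inf_{\mu}\|Tx+\mu Ty\|\leq\epsilon\|Tx\|$. Your extra remarks (the trivial case $T=\theta$ and the $\epsilon=0$ caveat) are harmless additions but not needed, since the displayed chain of inequalities already covers them.
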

\begin{proof}
	Let $x \parallel^{\epsilon}y$. Then $\inf\{\|x+\lambda y\|~:~\lambda \in K\} \leq \epsilon \|x\|$. Thus, $\inf\{\|Tx+\lambda Ty\|~:~\lambda \in K\} \leq \|T\| \inf\{\|x+\lambda y\|~:~\lambda \in K\} \leq \epsilon \|T\|\|x\|=\epsilon \|Tx\|$. Therefore, $Tx \parallel^{\epsilon}Ty$. 
\end{proof}

In \cite[Th. 2.1]{J}, James characterized Birkhoff-James orthogonality in terms of linear functionals. We next state a lemma that characterizes approximate parallelism in terms of linear functionals. We would like to remark that the lemma follows from a slight variation of  \cite[Cor. 6.8]{Con}, and therefore its proof is omitted.
\begin{lemma}\label{theorem:epsilonparallel}
	Let $\mathbb{X}$ be a normed linear space. Let $x,~y \in \mathbb{X}$ and $d=\inf\{\|x+\lambda y\|:~\lambda \in K\}$. Then for any $\epsilon \in [0,1)$, $x \parallel^{\epsilon}y$ if and only if there exists a linear functional $f \in S_{\mathbb{X}^*}$ such that $f(x)=d\leq \epsilon \|x\|$ and $f(y)=0$.
\end{lemma}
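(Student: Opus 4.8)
The plan is to reduce the statement to the classical Hahn--Banach duality formula for the distance to a subspace, which is precisely the content of the variant of \cite[Cor. 6.8]{Con} alluded to above. The starting observation is that $d=\inf\{\|x+\lambda y\|:\lambda\in K\}$ is nothing but the distance from $x$ to the one-dimensional subspace $M:=\{\lambda y:\lambda\in K\}=\operatorname{span}\{y\}$, since as $\lambda$ ranges over $K$ the vector $-\lambda y$ ranges over all of $M$. Thus the whole lemma amounts to identifying this distance with the supremum of $|f(x)|$ taken over the norm-one annihilators of $y$, together with the attainment of that supremum by a functional that is moreover real and nonnegative on $x$.

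For the forward implication I would assume $x\parallel^{\epsilon}y$, which by definition means exactly $d\leq \epsilon\|x\|$, and then manufacture a norm-one functional $f$ with $f(y)=0$ and $f(x)=d$. When $d>0$ this is the standard construction: define $g$ on the subspace $N:=M+\operatorname{span}\{x\}$ by $g(\lambda y+\mu x):=\mu d$. Then $g(y)=0$ and $g(x)=d$, while $\|g\|=1$ follows from the estimate $\|\mu x+\lambda y\|=|\mu|\,\|x+(\lambda/\mu)y\|\geq |\mu|\,d$ for $\mu\neq 0$ (giving $\|g\|\leq 1$), combined with a minimizing sequence $\lambda_n$ with $\|x+\lambda_n y\|\to d$ (giving $\|g\|\geq 1$). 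A norm-preserving Hahn--Banach extension of $g$ to all of $\mathbb{X}$ yields $f\in S_{\mathbb{X}^*}$ with $f(y)=0$ and $f(x)=d$. Finally, multiplying $f$ by a suitable element of $\mathbb{T}$ makes $f(x)=d\geq 0$ while leaving $f(y)=0$ and $\|f\|=1$ intact; together with the hypothesis this gives $f(x)=d\leq \epsilon\|x\|$, as required.

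The converse is immediate: the hypotheses already include $d\leq \epsilon\|x\|$, and since $d=\inf\{\|x+\lambda y\|:\lambda\in K\}$ by definition, this is precisely the defining inequality for $x\parallel^{\epsilon}y$. (One may also note that any $f\in S_{\mathbb{X}^*}$ with $f(y)=0$ satisfies $f(x)=f(x+\lambda y)\leq \|x+\lambda y\|$ for every $\lambda\in K$, so that $f(x)\leq d$ automatically, confirming that the relation $f(x)=d$ genuinely witnesses attainment of the distance.) The only point I expect to require real care is this attainment step in the degenerate case $d=0$: there $x$ lies in the (closed, one-dimensional) subspace $M$, and one must instead produce a norm-one functional annihilating $y$, which then annihilates $x$ as well. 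Such a functional exists exactly when $\overline{M}\neq \mathbb{X}$, and this holds automatically once $\dim\mathbb{X}\geq 2$ or $y=\theta$; away from this boundary situation the argument is the routine Hahn--Banach separation described above, which is why the result may be quoted as a slight variation of \cite[Cor. 6.8]{Con}.
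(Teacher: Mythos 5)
Your proof is correct and takes essentially the same route as the paper, which omits the proof entirely with the remark that the lemma is a slight variation of \cite[Cor. 6.8]{Con}: your construction of $g$ on $\operatorname{span}\{x,y\}$ by $g(\lambda y+\mu x)=\mu d$, the norm computation via a minimizing sequence, and the norm-preserving Hahn--Banach extension is precisely the classical argument behind that citation (the final rotation by an element of $\mathbb{T}$ is superfluous, since $f(x)=d\geq 0$ already holds by construction). Your caveat about the degenerate case $\dim\mathbb{X}=1$ with $y\neq\theta$ and $x\neq\theta$ --- where $d=0$ forces $x\parallel^{\epsilon}y$ yet no $f\in S_{\mathbb{X}^*}$ can annihilate $y$ --- is a genuine and correct observation about a boundary situation the paper's statement silently ignores.
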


 Now, we obtain a  complete characterization of norm-parallelism of compact linear operators defined on a reflexive Banach space, which substantially improves on \cite[Th. 4.24]{BCMWZ}.
\begin{theorem}\label{theorem:compact}
 Let $\mathbb{X}$ be a reflexive Banach space and $\mathbb{Y}$ be any normed linear space. Let $T,~A\in \mathbb{K}(\mathbb{X},\mathbb{Y})$.
Then $T\parallel A$ if and only if there exists $x \in M_{T} \cap M_{A}$ such that $Tx \parallel  Ax$.
\end{theorem}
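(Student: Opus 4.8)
The plan is to prove the two implications separately. The reverse (sufficiency) direction is a short triangle-inequality computation, while the forward (necessity) direction rests on a single norm-attainment argument that is where reflexivity of $\mathbb{X}$ and compactness of the operators are genuinely used.

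First I would dispose of the easy direction. Suppose there is $x \in M_T \cap M_A$ with $Tx \parallel Ax$, so that $\|Tx + \mu Ax\| = \|Tx\| + \|Ax\|$ for some $\mu \in \mathbb{T}$. Since $x \in M_T \cap M_A$ gives $\|Tx\| = \|T\|$ and $\|Ax\| = \|A\|$, the unit vector $x$ witnesses $\|(T + \mu A)x\| = \|T\| + \|A\|$, whence $\|T + \mu A\| \geq \|T\| + \|A\|$. The reverse inequality is just subadditivity of the operator norm, so $\|T + \mu A\| = \|T\| + \|A\|$ and $T \parallel A$.

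For the forward direction, assume $T \parallel A$, i.e. $\|T + \lambda A\| = \|T\| + \|A\|$ for some $\lambda \in \mathbb{T}$. The key observation is that $T + \lambda A \in \mathbb{K}(\mathbb{X},\mathbb{Y})$ is compact, and a compact operator on a reflexive Banach space attains its norm: choosing a norming sequence $(x_n)$ in $S_{\mathbb{X}}$ and passing to a subsequence $x_{n_k} \rightharpoonup x_0$ (possible because $B_{\mathbb{X}}$ is weakly compact by reflexivity), compactness upgrades weak convergence to $\|(T + \lambda A)x_{n_k} - (T + \lambda A)x_0\| \to 0$, so that $\|(T + \lambda A)x_0\| = \|T + \lambda A\|$, and a normalization (valid as long as $T + \lambda A \neq 0$) gives $x_0 \in S_{\mathbb{X}}$. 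I would then read off everything from the single chain
\[
\|T\| + \|A\| = \|(T + \lambda A)x_0\| \leq \|Tx_0\| + \|Ax_0\| \leq \|T\| + \|A\|,
\]
which forces all inequalities to be equalities. From $\|Tx_0\| + \|Ax_0\| = \|T\| + \|A\|$ together with the termwise bounds $\|Tx_0\| \leq \|T\|$ and $\|Ax_0\| \leq \|A\|$ we get $\|Tx_0\| = \|T\|$ and $\|Ax_0\| = \|A\|$, so $x_0 \in M_T \cap M_A$; and the first inequality being an equality reads $\|Tx_0 + \lambda Ax_0\| = \|Tx_0\| + \|Ax_0\|$ with $\lambda \in \mathbb{T}$, i.e. $Tx_0 \parallel Ax_0$.

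The main obstacle, and the one place where the hypotheses do all the work, is establishing that $T + \lambda A$ attains its norm; reflexivity supplies weak compactness of the ball and compactness converts weak to norm convergence of the images, and it is exactly this norm-attainment that lets a single point $x_0$ serve simultaneously for $M_T$, $M_A$, and the pointwise parallelism, thereby replacing the connectivity hypothesis on $M_T$ and the smoothness and strict convexity hypotheses on $\mathbb{Y}$ of the earlier result. The only remaining care is the degenerate case $T = A = 0$ (equivalently $T + \lambda A = 0$), where $M_T \cap M_A = S_{\mathbb{X}}$ and $Tx \parallel Ax$ holds trivially; I would dismiss this at the outset so that the normalization step producing $x_0 \in S_{\mathbb{X}}$ is justified.
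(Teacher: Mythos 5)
Your proposal is correct and follows essentially the same route as the paper: both directions hinge on the fact that the compact operator $T+\lambda A$ on a reflexive space attains its norm at some $x_0\in S_{\mathbb{X}}$, after which the chain $\|T\|+\|A\|=\|(T+\lambda A)x_0\|\leq\|Tx_0\|+\|Ax_0\|\leq\|T\|+\|A\|$ forces $x_0\in M_T\cap M_A$ and $Tx_0\parallel Ax_0$, with the sufficiency direction being the same triangle-inequality computation. The only differences are cosmetic: you spell out the weak-compactness argument for norm attainment (which the paper invokes as known) and explicitly dispose of the degenerate case $T=A=0$, which the paper leaves implicit.
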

\begin{proof}
	First we prove the necessary part of the theorem. Let $T \parallel A$. Then there exists $\lambda \in \mathbb{T}$ such that $\|T+ \lambda A\|= \|T\|+\|A\|$. The operator $T + \lambda A$, being a compact operator on a reflexive Banach space, attains its norm. Therefore, there exists $x \in S_{\mathbb{X}}$ such that $\|T+\lambda A\|=\|(T+\lambda A)x\|$. Thus,
	\begin{eqnarray*}
	      \|T\|+\|A\|&=&\|T+\lambda A\|\\
	      &=&\|(T+\lambda A)x\|\\
	      &\leq& \|Tx\|+\|Ax\|\\
	      &\leq& \|Tx\|+\|A\|\\
	      &\leq& \|T\|+\|A\|.
	\end{eqnarray*}
	 This implies that $\|Tx+\lambda A x\|=\|Tx\|+\|Ax\|$ and $\|T x\|=\|T\|$. Similarly, $\|Ax\|=\|A\|$. Therefore, $x\in M_{T}\cap M_{A}$ and $Tx \parallel Ax$. This completes the proof of the necessary part of the theorem.\\
	For the sufficient part, suppose that  there exists $x \in M_{T} \cap M_{A}$ such that $Tx \parallel Ax$. Then there exists $\lambda \in \mathbb{T}$ such that $\|Tx+\lambda Ax\|=\|Tx\|+\|Ax\|$. Therefore, 
	\[\|T\|+\|A\|\geq \|T+\lambda A\|\geq \|(T+\lambda A)x\|=\|Tx\|+\|Ax\|=\|T\|+\|A\|.\]
	 Thus, $\|T+\lambda A\|=\|T\|+\|A\|$, i.e., $T\parallel A$. This completes the proof of the theorem.
\end{proof}

We make note of the following remark that will be needed later in Theorem \ref{theorem:compactortho}.

\begin{remark}\label{remark:compact}
From the proof of Theorem \ref{theorem:compact}, it is clear that $\|T+\lambda A\|=\|T\|+\|A\|$ for $\lambda \in \mathbb{T}$ if and only if there exists $x\in M_{T}\cap M_{A}$ such that $\|Tx+\lambda Ax\|=\|Tx\|+\|Ax\|$. 
\end{remark}

We next give an example to show that the compactness of $T,~A$ in Theorem \ref{theorem:compact} is essential.

\begin{example}
	 Consider the right shift operator $T:\ell_2\longrightarrow \ell_2$ defined by $T(x_1,x_2,$\\ $x_3,\ldots)=(0,x_1,x_2,x_3,\ldots)$. Let $ I $ be the identity operator on $ \ell_2. $ Then $\|T\|=\|I\|=1.$ Consider $y_n=\frac{1}{\sqrt{n}}(\underbrace{1,1,\ldots,1}_{n},0,0,\ldots)\in S_{\ell_2}$ for each $n \in \mathbb{N}$. Then $\|(T+I)y_n\|^2=\frac{1}{n}\|(1,2,2,\ldots,2,1,0,0,\ldots)\|^2=\frac{1}{n}[2+4(n-1)]\longrightarrow 4$. Thus $2\leq \|T + I\| \leq \|T\|+\|I\| =  2 \Rightarrow \|T+I\|=\|T\|+\|I\|$, i.e., $T\parallel I$. We claim that there does not exist any $x\in M_T\cap M_I$ such that $Tx\parallel Ix$. For, if $Tx\parallel Ix$ for some $x=(x_1,x_2,x_3,\ldots)\in M_T \cap M_I$ then there exists $\lambda \in \mathbb{T}$ such that $\|Tx+\lambda Ix\|=\|Tx\|+\|Ix\|$ and so  $ \|Tx\|\|Ix\| = Re\{\overline{\lambda}\langle Tx,Ix\rangle\}\leq |\overline{\lambda}\langle Tx,Ix\rangle|=| \langle Tx,Ix\rangle| \leq \|Tx\|\|Ix\|.$ 	Then   $ | \langle Tx,Ix\rangle| = \|Tx\|\|Ix\|=\|T\|\|I\|,$ since $x \in M_T.$
	Then $  |0\overline x_1+x_1 \overline x_2 +x_2 \overline x_3 + \ldots |=1=(0^2+|x_1|^2+|x_2|^2+\ldots)^{\frac{1}{2}}(|x_1|^2+|x_2|^2+|x_3|^2+\ldots)^{\frac{1}{2}}$. Thus, by the equality condition of Cauchy-Schwarz inequality, we have, $(0,x_1,x_2,x_3,\ldots)=\lambda(x_1,x_2,x_3,\ldots)$ for some $\lambda \in K$, which implies that $x=0$, a contradiction. 
\end{example}

 In the next theorem, we characterize norm-parallelism of bounded linear operators between any two normed linear spaces. 
\begin{theorem}\label{theorem:bounded}
	Let $\mathbb{X},~\mathbb{Y}$ be two normed linear spaces and $T,~A \in \mathbb{B}(\mathbb{X},\mathbb{Y})$. Then $T\parallel A$ if and only if there exists a sequence $\{x_n\}$ in $S_{\mathbb{X}}$ such that $$\lim_{n \to \infty}\|Tx_n\|=\|T\|,~\lim_{n \to \infty}\|Ax_n\|=\|A\|$$ and $$\lim_{n\to\infty}\|Tx_n+\lambda Ax_n\|=\|T\|+\|A\|$$ for some $\lambda \in \mathbb{T}$.
\end{theorem}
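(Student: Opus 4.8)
The plan is to prove both implications directly from the definition of the operator norm, without invoking norm attainment at any point. This is precisely the feature that distinguishes the present (bounded) case from the compact case of Theorem \ref{theorem:compact}: there the operator $T+\lambda A$ genuinely attains its norm on a reflexive domain, whereas here I expect to have to replace a single norm-attaining vector by a norm-\emph{approximating} sequence, which is exactly what the statement records.

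For the necessity, I would begin with the hypothesis $T\parallel A$, which furnishes some $\lambda\in\mathbb{T}$ with $\|T+\lambda A\|=\|T\|+\|A\|$. Since $\|T+\lambda A\|=\sup_{x\in S_{\mathbb{X}}}\|(T+\lambda A)x\|$, I can extract a sequence $\{x_n\}$ in $S_{\mathbb{X}}$ along which $\|Tx_n+\lambda Ax_n\|\to\|T+\lambda A\|=\|T\|+\|A\|$. The key chain of inequalities is then
\[
\|Tx_n+\lambda Ax_n\|\le\|Tx_n\|+\|Ax_n\|\le\|T\|+\|A\|,
\]
whose two ends both converge to $\|T\|+\|A\|$, forcing $\|Tx_n\|+\|Ax_n\|\to\|T\|+\|A\|$ as well. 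To split this into the two separate limits $\|Tx_n\|\to\|T\|$ and $\|Ax_n\|\to\|A\|$, I would note that $\|T\|-\|Tx_n\|\ge0$ and $\|A\|-\|Ax_n\|\ge0$ while their sum tends to $0$; two non-negative quantities with vanishing sum must each vanish. This is the one step that requires a moment's care, and it is where the first two conclusions of the theorem are produced.

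The sufficiency is then immediate and, in fact, uses only the third hypothesis: given a sequence $\{x_n\}$ in $S_{\mathbb{X}}$ with $\|Tx_n+\lambda Ax_n\|\to\|T\|+\|A\|$ for some $\lambda\in\mathbb{T}$, I would bound $\|T+\lambda A\|\ge\|(T+\lambda A)x_n\|=\|Tx_n+\lambda Ax_n\|$ for every $n$ and pass to the limit to obtain $\|T+\lambda A\|\ge\|T\|+\|A\|$, while the triangle inequality gives the reverse bound $\|T+\lambda A\|\le\|T\|+|\lambda|\,\|A\|=\|T\|+\|A\|$. Equality yields $T\parallel A$.

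I do not anticipate any substantial obstacle: the entire content of the theorem lies in recognizing that approximating the supremum defining $\|T+\lambda A\|$ along a sequence is the correct surrogate for the norm-attaining vector of the compact setting. The only genuinely delicate point is the non-negativity splitting used in the necessity direction to recover the two individual limits $\|Tx_n\|\to\|T\|$ and $\|Ax_n\|\to\|A\|$ from the convergence of their sum. It is worth remarking that the first two conditions are superfluous for the reverse implication, so they should be viewed as free by-products of the forward direction, included to make the characterization symmetric.
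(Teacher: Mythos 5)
Your proposal is correct and follows essentially the same route as the paper: extract a norm-approximating sequence for $T+\lambda A$, squeeze with the triangle inequality to recover the individual limits, and observe that sufficiency needs only the third limit together with $\|T+\lambda A\|\leq\|T\|+\|A\|$. The only cosmetic difference is that the paper first passes to a subsequence along which $\|Tx_n\|$ and $\|Ax_n\|$ converge and then runs a chain of inequalities, whereas your non-negativity splitting $(\|T\|-\|Tx_n\|)+(\|A\|-\|Ax_n\|)\to 0$ yields both limits for the full sequence directly.
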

\begin{proof}
	First we prove the necessary part of the theorem. Let $T\parallel A$. Then there exists $\lambda \in \mathbb{T}$ such that $\|T+ \lambda A\|= \|T\|+\|A\|$. Now, there exists a sequence $\{x_n\}$ in $S_{\mathbb{X}}$ such that $\lim_{n\to\infty}\|(T+\lambda A)x_n\|=\|T+\lambda A\|$. Since $\{\|Tx_n\|\}$ and $\{\|Ax_n\|\}$ are bounded sequences of real numbers, without loss of generality (if necessary,  passing onto a subsequence) we can assume that $\lim_{n\to\infty}\|Tx_n\|$ and $\lim_{n\to\infty}\|Ax_n\|$ exists. Therefore, 
	\begin{eqnarray*}
         \|T\|+\|A\|&=&\|T+\lambda A\|\\
               &=&\lim_{n\to\infty}\|(T+\lambda A)x_n\|\\
               &\leq &\lim_{n\to\infty}\|Tx_n\|+\lim_{n\to\infty}\|Ax_n\|\\
               &\leq &\lim_{n\to\infty}\|Tx_n\|+\|A\|\\
               &\leq &\|T\|+\|A\|.
	\end{eqnarray*}
	 This implies that $\lim_{n\to\infty}\|Tx_n+\lambda A x_n\|=\lim_{n\to\infty}\|Tx_n\|+\lim_{n\to\infty}\|Ax_n\|$ and $\lim_{n\to\infty}\|T x_n\|=\|T\|$. Similarly, $\lim_{n\to\infty}\|A x_n\|=\|A\|$. This completes the proof of the necessary part of the theorem.\\
	For the sufficient part of the theorem, assume that  there exists a sequence $\{x_n\}$ in $S_{\mathbb{X}}$ such that $\lim_{n \to \infty}\|Tx_n\|=\|T\|,~\lim_{n \to \infty}\|Ax_n\|=\|A\|$ and $\lim_{n\to\infty}\|Tx_n+\lambda A x_n\|=\|T\|+\|A\|$ for some $\lambda \in \mathbb{T}$. Therefore, 
	\begin{eqnarray*}
	   \|T\|+\|A\|&\geq &\|T+\lambda A\|\\
	   &\geq &\lim_{n\to\infty}\|(T+\lambda A)x_n\|\\
	   &=&\|T\|+\|A\|.
	\end{eqnarray*}
   Thus, $\|T+\lambda A\|=\|T\|+\|A\|$, i.e., $T\parallel A$. This completes the proof of the theorem.
\end{proof}
\begin{remark}
We note that norm parallelism in operator space can also be characterized by \cite[Th. 2.4]{ZM} using the notion of Birkhoff-James orthogonality. 
\end{remark}

We next give an easy characterization of strictly convex spaces in terms of norm parallelism.  

\begin{theorem}\label{theorem:strictlyconvex}
A normed linear space $\mathbb{X}$ is strictly convex if and only if for any $x,y\in \mathbb{X}$, $x\parallel y \Leftrightarrow \{x,y\}$ is linearly dependent. 
\end{theorem}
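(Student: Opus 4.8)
The plan is to reduce the stated biconditional to its one nontrivial implication and then treat each direction separately. First I would record the observation already noted in the introduction, that in any normed linear space linear dependence forces norm-parallelism: if $y=cx$ (or one of the vectors is $\theta$), then choosing $\lambda\in\mathbb{T}$ with $\lambda c=|c|$ gives $\|x+\lambda y\|=(1+|c|)\|x\|=\|x\|+\|y\|$, so $x\parallel y$. Consequently the equivalence ``$x\parallel y\Leftrightarrow\{x,y\}$ linearly dependent'' holds for every pair precisely when the single implication ``$x\parallel y\Rightarrow\{x,y\}$ linearly dependent'' holds for every pair. Thus it suffices to prove that $\mathbb{X}$ is strictly convex if and only if norm-parallelism implies linear dependence.

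For the forward direction, I would assume $\mathbb{X}$ is strictly convex and suppose $x\parallel y$ with both $x,y$ nonzero (the degenerate cases being trivially dependent). Then there is $\lambda\in\mathbb{T}$ with $\|x+\lambda y\|=\|x\|+\|y\|=\|x\|+\|\lambda y\|$. The quickest route is to note that $\frac{x+\lambda y}{\|x\|+\|y\|}$ is a convex combination of the unit vectors $x/\|x\|$ and $\lambda y/\|y\|$, with positive coefficients $\frac{\|x\|}{\|x\|+\|y\|}$ and $\frac{\|y\|}{\|x\|+\|y\|}$, and that its norm equals $\frac{\|x+\lambda y\|}{\|x\|+\|y\|}=1$. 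Since in a strictly convex space a proper convex combination of two distinct unit vectors has norm strictly less than $1$, these two unit vectors must coincide, giving $x=\frac{\|x\|}{\|y\|}\lambda y$ and hence linear dependence of $\{x,y\}$.

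For the reverse direction I would argue the contrapositive. If $\mathbb{X}$ is not strictly convex, there exist distinct $u,v\in S_{\mathbb{X}}$ with $\|u+v\|=2$; taking $\lambda=1\in\mathbb{T}$ shows $u\parallel v$. It then remains to verify that $\{u,v\}$ is linearly independent. If instead $u=cv$ with $|c|=1$ and $c\neq1$, then $\|u+v\|=|1+c|\,\|v\|=|1+c|$, which equals $0$ in the real case ($c=-1$) and equals $2|\cos(\tfrac{1}{2}\arg c)|<2$ in the complex case, contradicting $\|u+v\|=2$. Hence $u$ and $v$ are independent, so the pair $u\parallel v$ violates the implication and the hypothesis fails, completing the contrapositive.

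The only genuinely delicate point is the strict-convexity reformulation invoked in the forward direction, so I expect the main care to lie in justifying that a norm-one proper convex combination of two distinct unit vectors cannot occur in a strictly convex space; a secondary subtlety, in the reverse direction, is cleanly ruling out the proportional case for unit vectors over both $\mathbb{R}$ and $\mathbb{C}$.
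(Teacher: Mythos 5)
Your proof is correct and takes essentially the same route as the paper's: the forward direction rests on the triangle-equality (equivalently, norm-one convex-combination) characterization of strict convexity, and the converse exhibits a flat piece of the unit sphere as a norm-parallel, linearly independent pair. As a minor bonus, you explicitly rule out the proportional case $u=cv$ with $|c|=1$, $c\neq 1$, thereby justifying the linear independence of the witnesses, a point the paper's proof asserts without comment.
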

\begin{proof}
Let $\mathbb{X}$ be strictly convex. Let $\{x,y\}$ be linearly dependent. Let $y=\alpha x.$ If $\alpha =0,$ then clearly,$\|x+y\|=\|x\|+\|y\|.$ Let $\alpha\neq 0.$ Let $\lambda=\frac{\overline{\alpha}}{\mid \alpha \mid}.$ Then $\lambda \in \mathbb{T}$ and $\|x+\lambda y\|=\|x+\frac{\overline{\alpha}}{\mid \alpha \mid}~\alpha x\|=\|x+ | \alpha | x\|=\|x\|+\|y\|.$ Thus, $x\parallel y$. On the other hand, let $x\parallel y$. Then there exists $\lambda \in \mathbb{T}$ such that $\|x+\lambda y\|=\|x\|+\|y\|=\|x\|+\|\lambda y\|$. Since $\mathbb{X}$ is strictly convex, $\{x,y\}$ is linearly dependent. \\
Conversely, suppose that $\mathbb{X}$ is not strictly convex. Then there exists two linearly independent vectors $x,y\in S_{\mathbb{X}}$ and $t\in (0,1)$ such that $\|(1-t)x+ty\|=1=\|(1-t)x\|+\|ty\|.$ Therefore, it follows from the homogeneity property of parallelism that  $x\parallel y$. This completes the proof of the theorem.
\end{proof}

In \cite[Th. 4.14]{BCMWZ}, Bottazzi et al. proved that if $\mathbb{X}$ is a locally uniformly convex Banach space and $A\in \mathbb{K}(\mathbb{X})$ is such that $A^{m-1}\neq 0$ and $A^m=0$ for some $m\in \mathbb{N}$ then $A^k \not\parallel A^j$ for every $1\leq k <j <m$. It turns out that the condition ``$\mathbb{X}$ is a locally uniformly convex Banach space'' is redundant and as a matter of fact, we prove the theorem under the weaker assumption of strict convexity. We further show that strict convexity is essential for the result to hold true.

\begin{theorem}\label{theorem:nilpotent}
	Let $\mathbb{X}$ be a strictly convex Banach space. Let $A \in \mathbb{K}(\mathbb{X})$ be such that $A^m=0$ and $A^j\neq 0$ for all $1\leq j <m$. Then $A^k  \nparallel  A^j$ for every $1\leq k<j<m$.
\end{theorem}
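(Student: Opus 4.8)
The plan is to argue by contradiction. Suppose $A^k \parallel A^j$ for some $1 \le k < j < m$. The idea is to transfer the operator parallelism down to a single pair of vectors in $\mathbb{X}$, exploit strict convexity to force linear dependence of that pair, and then contradict the nilpotency of $A$. First I would invoke Theorem \ref{theorem:bounded}: since $A^k \parallel A^j$, there is a sequence $\{x_n\}$ in $S_{\mathbb{X}}$ and a scalar $\lambda \in \mathbb{T}$ with $\lim_n \|A^k x_n\| = \|A^k\|$, $\lim_n \|A^j x_n\| = \|A^j\|$, and $\lim_n \|A^k x_n + \lambda A^j x_n\| = \|A^k\| + \|A^j\|$.

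Next I would use compactness. Because $A$ is compact and $\{x_n\}$ is bounded, after passing to a subsequence I may assume $A x_n \to y$ for some $y \in \mathbb{X}$. Since $A^{k-1}$ and $A^{j-1}$ are continuous, it follows that $A^k x_n = A^{k-1}(A x_n) \to A^{k-1} y =: u$ and $A^j x_n = A^{j-1}(A x_n) \to A^{j-1} y =: v$. Crucially, $v = A^{j-k} u$, since $j - 1 = (j-k) + (k-1)$. Passing to the limit in the three relations above yields $\|u\| = \|A^k\|$, $\|v\| = \|A^j\|$, and $\|u + \lambda v\| = \|u\| + \|v\|$, so that $u \parallel v$. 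Moreover $u \ne 0$ and $v \ne 0$, because $k, j < m$ guarantee $\|A^k\|, \|A^j\| > 0$.

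Finally, since $\mathbb{X}$ is strictly convex and $u \parallel v$, Theorem \ref{theorem:strictlyconvex} forces $\{u, v\}$ to be linearly dependent; as both vectors are nonzero, $v = c\, u$ for some scalar $c \ne 0$. Combining this with $v = A^{j-k} u$ gives $A^{j-k} u = c\, u$ with $u \ne 0$ and $c \ne 0$, i.e.\ $c$ is a nonzero eigenvalue of $A^{j-k}$. But $A^{j-k}$ is nilpotent, since $(A^{j-k})^m = (A^m)^{j-k} = 0$, whence $c^m u = (A^{j-k})^m u = 0$ forces $c = 0$, a contradiction. Therefore $A^k \nparallel A^j$.

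The main obstacle is the middle step: one must ensure that the operator-level parallelism genuinely descends to an \emph{attained}, nonzero pointwise parallelism $u \parallel v$ in the limit. This is exactly where compactness is indispensable, as it is what converts the merely approximate norming behaviour of $\{x_n\}$ into honest equalities for the limit vectors $u$ and $v$, and it is also what makes the identity $v = A^{j-k} u$ meaningful. I note that strict convexity alone, combined with Theorem \ref{theorem:bounded} and compactness, suffices here, so that no reflexivity hypothesis is required.
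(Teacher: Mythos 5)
Your proposal is correct and follows essentially the same route as the paper's proof: invoke Theorem \ref{theorem:bounded}, use compactness to extract limit vectors (the paper applies compactness of $A^k$ directly to get $A^k x_{n_i} \to y$, whereas you first take $Ax_n \to y$ and push forward by $A^{k-1}$ and $A^{j-1}$ --- an immaterial variation), then use strict convexity to force $A^{j-k}u = c\,u$ and nilpotency to kill $c$. The contradiction via $(A^{j-k})^m = 0$ and the observation that reflexivity is not needed both match the paper exactly.
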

\begin{proof}
	Let $1\leq k<j<m$. If possible, suppose that $A^k \parallel A^j$. Then from Theorem \ref{theorem:bounded}, we have, there exists $\{x_n\}$ in $S_{\mathbb{X}}$ such that $$\lim_{n\to\infty}\|A^k x_n\|=\|A^k\|,~\lim_{n\to\infty}\|A^j x_n\|=\|A^j\|$$ and $$\lim_{n\to\infty}\|A^k x_n+ \lambda A^j x_n\|=\|A^k\|+\|A^j\|,$$ for some $\lambda \in \mathbb{T}$. Since $A\in \mathbb{K}(\mathbb{X})$, it follows that $A^k,~A^j\in \mathbb{K}(\mathbb{X})$. Therefore, $\{A^k x_n\}$ has a convergent subsequence say, $\{A^k x_{n_i}\}$ converging to $y \in \mathbb{X}$. Hence $\|A^k x_{n_i}\|$ converges to $\|y\|$. Thus, $\|y\|=\|A^k\|$. Again, $\lim_{n\to\infty}(A^k x_{n_i})= y$ implies that $A^j x_{n_i}=A^{j-k}(A^k x_{n_i})$ converges to $A^{j-k}y$. Therefore, $\|A^j x_{n_i}\|$ converges to $\|A^{j-k}y\|$ and hence $\|A^{j-k}y\|=\|A^j\|$. Now, $A^k x_{n_i}+\lambda A^j x_{n_i}$ converges to $y +\lambda A^{j-k}y$ implies that  $\|A^k x_{n_i}+\lambda A^j x_{n_i}\|$ converges to $\|y +\lambda A^{j-k}y\|$. Therefore, $\|y+\lambda A^{j-k}y\|=\|A^k\|+\|A^j\|=\|y\|+\|A^{j-k}y\|$. Hence $y \parallel A^{j-k}y$. Since $\mathbb{X}$ is strictly convex, $A^{j-k}y=\alpha y$ for some $\alpha \in K$. This implies that $A^{m(j-k)}y=\alpha^m y$. Thus, $\alpha^m y=0$, since $A^m=0$. Therefore, either $y=0$ or $\alpha =0$. Now, $y=0$ gives that $A^k=0$ and $\alpha =0$ gives that $A^{j-k}y=0$, i.e., $A^j=0.$ Thus, in any case, we reach a  contradiction to the hypothesis. Therefore, $A^k \nparallel A^j$. This completes the proof of the theorem.    
\end{proof}

We now give an example to show that in Theorem \ref{theorem:nilpotent}, strict convexity of $\mathbb{X}$ is essential.
\begin{example}
 Let $\mathbb{X}=\ell_1^3$. Define $A\in \mathbb{B}(\mathbb{X})$ by 
	\[A(1,0,0)=-(0,1,0)\]
	\[A(0,1,0)=-(0,0,1)\]
	\[A(0,0,1)=(0,0,0).\]
	Clearly, $A\neq 0,~A^2 \neq 0$ and $A^3=0$. It is easy to observe that $(1,0,0)\in M_A \cap M_{A^2}$ and $A(1,0,0) \parallel A^2(1,0,0)$. Therefore, by Theorem \ref{theorem:compact}, we have $A \parallel A^2$. 
\end{example}

In  \cite[Th. 4.15]{BCMWZ}, Bottazzi et al. investigated norm-parallelism of idempotent operators defined on a locally uniformly convex Banach space. In the next theorem, we study the problem when the underlying space is strictly convex.
\begin{theorem}\label{theorem:idempotent}
	Let $\mathbb{X}$ be a strictly convex normed linear space. Let $A,~B \in \mathbb{K}(\mathbb{X})$ be such that $A \neq 0,~B \neq 0,~A^2=A$ and $B^2=B$. If $A \parallel B$ then $A(\mathbb{X})\cap B(\mathbb{X})\neq \{0\}$.
\end{theorem}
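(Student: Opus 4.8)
The plan is to combine the sequential characterization of operator parallelism from Theorem \ref{theorem:bounded} with the compactness of $A$ and $B$ to extract limit vectors, and then to exploit idempotency to locate a common nonzero vector in the two ranges.

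First, since $A \parallel B$, Theorem \ref{theorem:bounded} furnishes a sequence $\{x_n\}$ in $S_{\mathbb{X}}$ and a scalar $\lambda \in \mathbb{T}$ with $\|Ax_n\| \to \|A\|$, $\|Bx_n\| \to \|B\|$ and $\|Ax_n + \lambda Bx_n\| \to \|A\| + \|B\|$. Because $A, B \in \mathbb{K}(\mathbb{X})$, the sequence $\{Ax_n\}$ has a convergent subsequence, and passing to a further subsequence I may assume that $\{Bx_n\}$ converges as well. Thus along a common subsequence $Ax_n \to u$ and $Bx_n \to v$ for some $u, v \in \mathbb{X}$. Continuity of the norm then yields $\|u\| = \|A\|$, $\|v\| = \|B\|$ and $\|u + \lambda v\| = \|A\| + \|B\| = \|u\| + \|v\|$, so $u \parallel v$. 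Moreover $u \neq 0$ and $v \neq 0$, since $A \neq 0$ and $B \neq 0$.

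Next I invoke Theorem \ref{theorem:strictlyconvex}: strict convexity of $\mathbb{X}$ forces $\{u, v\}$ to be linearly dependent, and as both vectors are nonzero I may write $v = \alpha u$ with $\alpha \neq 0$. The crucial step, where idempotency enters, is to show that $u$ and $v$ belong to the respective ranges. By continuity of $A$, I compute $Au = A(\lim Ax_n) = \lim A^2 x_n = \lim Ax_n = u$, whence $u = Au \in A(\mathbb{X})$; identically $Bv = v$, so $v \in B(\mathbb{X})$. Since $B(\mathbb{X})$ is a subspace and $\alpha \neq 0$, it follows that $u = \alpha^{-1} v \in B(\mathbb{X})$. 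Therefore $0 \neq u \in A(\mathbb{X}) \cap B(\mathbb{X})$, which is the desired conclusion.

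I expect the only delicate point to be the double extraction of subsequences together with the verification that all three limit relations survive the passage, and this is entirely routine. The substantive content of the argument is the identity $Au = u$ (and $Bv = v$) produced by the relations $A^2 = A$ and $B^2 = B$, which is precisely what converts the abstract parallelism limit into membership in the ranges.
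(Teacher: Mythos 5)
Your proof is correct and takes essentially the same route as the paper's: apply Theorem \ref{theorem:bounded} to get the sequence $\{x_n\}$, use compactness of $A$ and $B$ to extract limits $u,v$ satisfying $u \parallel v$, invoke strict convexity (Theorem \ref{theorem:strictlyconvex}) to obtain linear dependence, and use idempotency via $Au=u$, $Bv=v$ to place a common nonzero vector in $A(\mathbb{X})\cap B(\mathbb{X})$. The only cosmetic difference is that you conclude $\alpha \neq 0$ immediately from $u,v\neq 0$, whereas the paper rules out the degenerate case at the end; the arguments are otherwise identical.
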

\begin{proof}
	Suppose $A \parallel B$. Then from Theorem \ref{theorem:bounded}, we have, there exists a sequence $\{x_n\}$ in $S_{\mathbb{X}}$ such that $\lim_{n\to\infty}\|Ax_n\|=\|A\|,~\lim_{n\to\infty}\|Bx_n\|=\|B\|$ and $\lim_{n\to\infty}\|Ax_n+\lambda Bx_n\|=\|A\|+\|B\|$ for some $\lambda \in \mathbb{T}$. Since $A$ and $B$ are compact operators, $\{Ax_n\}$ and $\{Bx_n\}$ have convergent subsequences. Without loss of generality we assume that $Ax_n \longrightarrow y$ and $Bx_n \longrightarrow z$. Therefore, $A^2x_n \longrightarrow Ay$. Since $A^2=A$, we have, $Ay=y$. Similarly $Bz=z$. Again, $Ax_n \longrightarrow y$ and $\|Ax_n\| \longrightarrow \|A\|$ implies that $\|A\|=\|y\|$. Similarly, $\|B\|=\|z\|$. Clearly, $Ax_n+\lambda Bx_n \longrightarrow y+\lambda z$. Therefore, $\|y+\lambda z\|=\|A\|+\|B\|=\|y\|+\|z\|$. Thus, $y \parallel z$. Since $\mathbb{X}$ is strictly convex, $z=\alpha y$ for some $\alpha \in K$. Now, $Ay=y,~Bz=z$ and $z=\alpha y$ gives that $z\in A(\mathbb{X})\cap B(\mathbb{X})$. Clearly, $z\neq 0$. For, otherwise $\|z\|=\|B\|=0$ implies that $B=0$, a contradiction to the hypothesis. Therefore, $A(\mathbb{X})\cap B(\mathbb{X})\neq \{0\}$. This completes the proof of the theorem.     
\end{proof}

The following example shows that in Theorem \ref{theorem:idempotent}, strict convexity of $\mathbb{X}$ is essential.
\begin{example}
	 Let $\mathbb{X}=\ell_1^3$ and $A,~B \in \mathbb{B}(\ell_1^3)$ be given by the following matrices (with respect to the standard ordered basis of $ \mathbb{R}^{3}) $ $$
	\begin{bmatrix}
	1&0&0\\
	0&0&0\\
	0&0&0
	\end{bmatrix}
	,
	\quad
	\begin{bmatrix}
	0&0&0\\
	0&1&0\\
	1&0&1
	\end{bmatrix}
	$$ respectively. Clearly, $A^2=A$ and $B^2=B$. It is easy to verify that $(1,0,0)\in M_A\cap M_B$ and $A(1,0,0)\parallel B(1,0,0)$. Therefore, by Theorem \ref{theorem:compact}, we have, $A \parallel B$. Clearly, in this case, $A(\mathbb{X})\cap B(\mathbb{X})=\{(0,0,0)\}$.
\end{example}

Next, we study approximate parallelism in the space of bounded linear operators on an infinite-dimensional Hilbert space.  
\begin{theorem}\label{theorem:epsilon}
	Let $\mathbb{H}$ be an infinite-dimensional Hilbert space. Let $\epsilon \in [0,1)$ and $T\in \mathbb{B}(\mathbb{H}).$ Then $(i)\Rightarrow (ii),$ where,\\
	(i) For any $A\in\mathbb{B}(\mathbb{H})$, $T\parallel^{\epsilon} A \Leftrightarrow$ there exists $ x \in M_T\cap M_A$ such that $Tx \parallel^{\epsilon} Ax$.\\
	(ii) There exists a finite-dimensional subspace $H_0$ of $\mathbb{H}$ such that $M_T=S_{H_0}$ and $\|T\|_{H_0^{\perp}}<\|T\|$. 
\end{theorem}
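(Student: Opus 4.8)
The plan is to argue by contraposition: assuming (ii) fails, I will produce a single $A\in\mathbb{B}(\mathbb{H})$ for which the biconditional (i) breaks down. The key preliminary observation is that on a Hilbert space $M_T$ is always the unit sphere of a subspace. Since $\|T\|^2 I-T^*T\geq 0$ and $\langle(\|T\|^2 I-T^*T)x,x\rangle=\|T\|^2-\|Tx\|^2$ for $x\in S_{\mathbb{H}}$, a unit vector $x$ belongs to $M_T$ exactly when $(\|T\|^2 I-T^*T)x=0$; hence $M_T=S_{H_0}$ with $H_0=\ker(\|T\|^2 I-T^*T)$. Therefore (ii) holds if and only if $1\leq\dim H_0<\infty$ and $\|T\|_{H_0^{\perp}}<\|T\|$, and its negation (for $T\neq 0$) splits into three exhaustive cases: (a) $H_0=\{0\}$, i.e. $M_T=\emptyset$; (b) $\dim H_0=\infty$; and (c) $1\leq\dim H_0<\infty$ but $\|T\|_{H_0^{\perp}}=\|T\|$.

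For cases (b) and (c) I would fix a unit vector $x_0\in H_0$ and take the rank-one operator $A$ given by $Ax=\langle x,x_0\rangle\,Tx_0$. Then $\|A\|=\|Tx_0\|=\|T\|$ and $x_0\in M_A$, while $Ax_0=Tx_0$; thus $x_0\in M_T\cap M_A$ and $Tx_0\parallel^{\epsilon}Ax_0$, because $\inf_{\mu\in K}\|Tx_0+\mu Ax_0\|$ vanishes at $\mu=-1$. So the right-hand side of (i) holds. To defeat the left-hand side I produce unit vectors orthogonal to $x_0$ on which $A$ vanishes but $T$ (nearly) attains its norm. In case (b), since $\dim H_0\geq 2$, pick a unit $u\in H_0$ with $u\perp x_0$; then $u\in M_T$, $Au=0$, and $\|(T+\mu A)u\|=\|Tu\|=\|T\|$ for every $\mu$. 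In case (c), since $\|T\|_{H_0^{\perp}}=\|T\|$, choose by the very definition of the restricted norm unit vectors $w_n\in H_0^{\perp}$ with $\|Tw_n\|\to\|T\|$; as $x_0\in H_0$ we have $w_n\perp x_0$ and $Aw_n=0$, so $\|(T+\mu A)w_n\|=\|Tw_n\|\to\|T\|$. In either case $\|T+\mu A\|\geq\|T\|$ for all $\mu$, whence $\inf_{\mu\in K}\|T+\mu A\|\geq\|T\|>\epsilon\|T\|$; thus $T\not\parallel^{\epsilon}A$ and (i) fails.

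The degenerate situations are handled directly. If $M_T=\emptyset$ (case (a)), take $A=T$: then $T\parallel^{\epsilon}T$ since $\inf_{\mu\in K}\|T+\mu T\|=0$, yet $M_T\cap M_A=\emptyset$, so (i) fails. If $T=0$ (which nominally lies in case (b) but makes $A$ degenerate), take any non-norm-attaining $A$, which exists because $\dim\mathbb{H}=\infty$; then $T\parallel^{\epsilon}A$ trivially while $M_A=\emptyset$ forces the right-hand side to fail. The whole argument is elementary once the structural identity $M_T=S_{H_0}$ is in hand, and notably no spectral-theoretic extraction is needed: the maximizing sequence in case (c) is furnished immediately by the definition of $\|T\|_{H_0^{\perp}}$. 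I therefore expect the only real care to lie in checking that cases (a)--(c) are exhaustive and that the same rank-one $A$ serves uniformly in (b) and (c).
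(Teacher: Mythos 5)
Your proof is correct, and it follows a genuinely leaner route than the paper's. The paper proves the same contrapositive implicitly, but with two separate bespoke constructions: when $\dim H_0=\infty$ it first shows (via equality in the Cauchy--Schwarz inequality applied to $T^*T$) that $T$ carries an orthonormal basis of $H_0$ to orthonormal vectors, then builds $Ae_n=\frac{1}{n^2}Te_n$ and estimates $\|(T+\lambda A)e_n\|=|1+\lambda/n^2|\to 1$; when $\|T\|_{H_0^{\perp}}=\|T\|$ it takes $A=TP_{H_0}$ and uses a maximizing sequence in $S_{H_0^{\perp}}$. You replace both constructions by the single rank-one operator $Ax=\langle x,x_0\rangle Tx_0$, and defeat $T\parallel^{\epsilon}A$ by exhibiting (exactly or asymptotically) norming vectors orthogonal to $x_0$ that $A$ annihilates --- this avoids the orthonormality lemma and the boundedness-of-extension issue entirely, at the cost of needing your case split $(a)$--$(c)$, which you correctly verify is exhaustive. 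Your self-contained derivation of $M_T=S_{H_0}$ with $H_0=\ker(\|T\|^2I-T^*T)$ replaces the paper's citation of \cite[Th. 2.2]{SP} and is sound. Two further points in your favor: your argument in case $(b)$ only needs $\dim H_0\geq 2$, so it in fact shows $(i)$ forces $\dim H_0\leq 1$, a conclusion stronger than $(ii)$ (consistent with the paper's subsequent example showing $(ii)\not\Rightarrow(i)$); and your explicit treatment of the degenerate cases patches a real lacuna in the paper, since when $M_T=\emptyset$ (i.e. $H_0=\{0\}$) the paper's second construction degenerates to $A=0$, for which both sides of the biconditional in $(i)$ are false and no contradiction arises --- your choice $A=T$ there, and a non-norm-attaining $A$ when $T=0$, is exactly the needed fix.
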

\begin{proof}
	Without loss of generality assume that $\|T\|=1$. From \cite[Th. 2.2]{SP}, we have, $M_T=S_{H_0}$, where $H_0$ is a subspace of $\mathbb{H}$. We first show that $H_0$ is finite-dimensional. If possible, suppose that $H_0$ is infinite-dimensional. Then there exists a sequence $\{e_n:n\in \mathbb{N}\}$ of orthonormal vectors in $H_0$. Extend this sequence to a complete orthonormal basis $\mathcal{B}=\{e_\alpha:\alpha \in \Lambda\supseteq \mathbb{N}\}$ of $\mathbb{H}.$  For each $e_\alpha \in H_0 \cap \mathcal{B} $ we have 
$ \| T^{*}T \| = \|T\|^2 = \|Te_\alpha \|^2 = \langle T^{*}T e_\alpha , e_\alpha \rangle \leq \|T^{*}Te_\alpha \| \|e_\alpha \| \leq \|T^{*}T\|, $
so that by the equality condition of Schwarz's inequality, we get $ T^{*}Te_\alpha = \lambda_\alpha e_\alpha $ for some scalar $\lambda_\alpha.$ 
Thus, $\{ Te_\alpha : e_\alpha \in H_0 \cap \mathcal{B}\}$ is a set of orthonormal vectors in $\mathbb{H}.$
 Define $A:\mathcal{B}\longrightarrow \mathbb{H}$ as follows:
	\begin{eqnarray*}
		Ae_n&=&\frac{1}{n^2}Te_n,~\forall n \in \mathbb{N}\\
		Ae_\alpha&=&0,~\forall~ e_\alpha \in  \mathcal{B} \setminus \{e_n:n\in \mathbb{N}.\}
			\end{eqnarray*}
	Since $\{Te_\alpha:\alpha \in H_0\cap \mathcal{B}\}$ is orthonormal, $A$ can be extended to a bounded linear operator on $\mathbb{H}$. Now, for any $\lambda \in K,~\|T+\lambda A\|\geq \|(T+\lambda A)e_n\|=|1+\frac{\lambda}{n^2}|\|Te_n\|=|1+\frac{\lambda}{n^2}|\longrightarrow 1$. Therefore, $\inf_{\lambda \in K}\|T+\lambda A\|\geq 1>\epsilon $. This implies that $T \not \parallel^\epsilon A$. But $e_1 \in M_T\cap M_A$ and clearly $Te_1 \parallel^{\epsilon}Ae_1$. However, this clearly contradicts the hypothesis. Therefore, $H_0$ must be finite-dimensional. \\
	Next, we show that $\|T\|_{H_0^{\perp}}< 1$. If possible, suppose that $\|T\|_{H_0^{\perp}}= 1$. Then there exists a sequence $\{x_n\}$ in $S_{H_0^{\perp}}$ such that $\|Tx_n\|\longrightarrow 1$. Define $A:\mathbb{H}\longrightarrow \mathbb{H}$ by $Az=Tx$, where $z=x+y,~x\in H_0,~y\in H_0^{\perp}.$
	It is easy to verify that $A\in \mathbb{B}(\mathbb{H})$. Now, for any $\lambda \in K$, $\|T+\lambda A\|\geq \|(T+\lambda A)x_n\|=\|Tx_n\|\longrightarrow 1$. Therefore, $\inf_{\lambda\in K}\|T+\lambda A\|\geq 1>\epsilon $. This shows that $T\not\parallel^\epsilon A$ but clearly, for any $x\in M_T\cap M_A$, $Tx\parallel^\epsilon Ax$. Once again, this contradicts the hypothesis. Therefore, we must have, $\|T\|_{H_0^{\perp}}<1$. This establishes the theorem.  
\end{proof}

The following example shows that the condition $(ii)$ of Theorem \ref{theorem:epsilon} does not imply condition $(i)$ of Theorem \ref{theorem:epsilon}, i.e., there exists $T,A  \in \mathbb{B}(\mathbb{H})$ and a finite-dimensional subspace $H_0$ of $\mathbb{H}$ such that $M_T=S_{H_0}$ and $\|T\|_{H_0^{\perp}}<\|T\|$ but $T\parallel^{\epsilon} A \Leftrightarrow Tx \parallel^{\epsilon} Ax$ for some $x\in M_T\cap M_A$ does not hold.
\begin{example}
  Consider $T,A\in \mathbb{B}(\ell_2)$ defined by 
	\begin{eqnarray*}
	T(x_1,x_2,x_3,\ldots,x_n,\ldots)&=&(x_1,\frac{x_2}{2},\frac{x_3}{2},\ldots,\frac{x_n}{2},\ldots),\\
	A(x_1,x_2,x_3,\ldots,x_n,\ldots)&=&(x_1,0,0,\ldots,0,\ldots),
	\end{eqnarray*}
	respectively, where $(x_1,x_2,x_3,\ldots,x_n,\ldots)\in \ell_2$. It is easy to verify that $M_T=S_{H_0}$, where $H_0=span\{(1,0,0,\ldots)\}$  and $\|T\|_{H_0^{\perp}}=\frac{1}{2}<\|T\|$. Let $\epsilon \in [0,\frac{1}{2})$. Clearly, $(1,0,0,\ldots)\in M_T \cap M_A$ and  $T(1,0,0,\ldots)\parallel^{\epsilon}A(1,0,0,\ldots)$. Now, since $$\langle A(0,1,0,0,\dots,0,\ldots),(0,1,0,0,\ldots,0,\ldots)\rangle=0$$ and $$\langle T(0,1,0,0,\dots,0,\ldots),(0,1,0,0,\ldots,0,\ldots)\rangle=\frac{1}{2},$$ $\sup\{|\langle T\xi,\eta \rangle|:\|\xi\|=\|\eta\|=1,~\langle A\xi, \eta \rangle =0\}\geq \frac{1}{2}>\epsilon$. Therefore, by \cite[Th. 3.7]{MZ}, we have, $T\not\parallel^\epsilon A$. 
\end{example}

However, if we consider $T\parallel A$ instead of $T\parallel^{\epsilon}A~(\epsilon\in [0,1))$ then we have the following characterization. 

\begin{theorem}\label{theorem:hil}
	Let $\mathbb{H}$ be an infinite-dimensional Hilbert space. Let $T\in \mathbb{B}(\mathbb{H})$. Then the following two conditions are equivalent.\\
	(i) For any $A\in\mathbb{B}(\mathbb{H})$, $T\parallel A \Leftrightarrow$ there exists $x \in M_T \cap M_A$ such that $Tx \parallel Ax$. \\
	(ii) There exists a finite-dimensional subspace $H_0$ of $\mathbb{H}$ such that $M_T=S_{H_0}$ and $\|T\|_{H_0^{\perp}}<\|T\|$. 
\end{theorem}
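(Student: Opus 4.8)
The plan is to prove the two implications separately, treating $(ii)\Rightarrow(i)$ as the substantive direction—it is precisely this direction that fails for $\epsilon$-parallelism, as the example following Theorem \ref{theorem:epsilon} shows—and to handle $(i)\Rightarrow(ii)$ by contraposition. Throughout I would normalize $\|T\|=1$.

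For $(ii)\Rightarrow(i)$, set $c=\|T\|_{H_0^{\perp}}<1$. The backward implication ``$\exists x\in M_T\cap M_A$ with $Tx\parallel Ax \Rightarrow T\parallel A$'' holds for arbitrary operators and is exactly the sufficiency argument of Theorem \ref{theorem:compact} (see also Remark \ref{remark:compact}), so only the forward implication uses $(ii)$. Assuming $T\parallel A$, after replacing $A$ by $\lambda A$ I may take the multiplier to be $1$, so $\|T+A\|=\|T\|+\|A\|$. Choosing $\{x_n\}\subset S_{\mathbb H}$ with $\|(T+A)x_n\|\to\|T+A\|$ and repeating the squeeze from the necessity part of Theorem \ref{theorem:bounded}, I obtain (along a subsequence) $\|Tx_n\|\to 1$, $\|Ax_n\|\to\|A\|$ and $\|Tx_n+Ax_n\|\to\|T\|+\|A\|$. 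I then decompose $x_n=u_n+v_n$ with $u_n\in H_0$ and $v_n\in H_0^{\perp}$. Since every unit vector of $H_0$ lies in $M_T$, the Schwarz-equality argument already used in the proof of Theorem \ref{theorem:epsilon} gives $T^{*}T=I$ on $H_0$; hence $\langle Tu_n,Tv_n\rangle=\langle u_n,v_n\rangle=0$ and $\|Tx_n\|^2=\|u_n\|^2+\|Tv_n\|^2\le\|u_n\|^2+c^2\|v_n\|^2$.

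The crucial step is the next one. Combining the last estimate with $\|u_n\|^2+\|v_n\|^2=1$ and $\|Tx_n\|^2\to1$ yields $1\le c^2+(1-c^2)\limsup\|u_n\|^2$, and since $c<1$ this forces $\|u_n\|\to1$ and $\|v_n\|\to0$. Finite-dimensionality of $H_0$ now enters decisively: $\{u_n\}$ has a subsequence converging to some $u\in S_{H_0}=M_T$, whence $x_{n_k}\to u$. Continuity then gives $\|Au\|=\|A\|$, so $u\in M_A$, and $\|Tu+Au\|=\|T\|+\|A\|=\|Tu\|+\|Au\|$, i.e. $Tu\parallel Au$, producing the required witness in $M_T\cap M_A$.

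For $(i)\Rightarrow(ii)$ I would argue by contraposition. By \cite[Th. 2.2]{SP}, $M_T=S_{H_0}$ for a subspace $H_0$, and the negation of $(ii)$ splits into two cases: either $H_0$ is infinite-dimensional, or $H_0$ is finite-dimensional with $\|T\|_{H_0^{\perp}}=1$. The point to exploit is that, because the backward implication in $(i)$ is automatic, violating $(i)$ amounts to exhibiting an $A$ with $T\parallel A$ but $M_T\cap M_A=\emptyset$; it therefore suffices to build in each case an $A$ with $\|A\|=1$, $T\parallel A$ and $M_A=\emptyset$. When $H_0$ is infinite-dimensional, pick orthonormal $\{e_n\}\subset H_0$ (so $\{Te_n\}$ is orthonormal) and set $Ax=\sum_n c_n\langle x,e_n\rangle Te_n$ with $c_n\uparrow1$ and $c_n<1$; then $\|A\|=1$, $A$ attains its norm nowhere, and $\|(T+A)e_n\|=1+c_n\to2$ gives $T\parallel A$. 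When $\|T\|_{H_0^{\perp}}=1$, take $A=T(I-P)$ with $P$ the orthogonal projection onto $H_0$: then $\|A\|=\|T\|_{H_0^{\perp}}=1$; any $g_n\in S_{H_0^{\perp}}$ with $\|Tg_n\|\to1$ yields $\|(T+A)g_n\|=2\|Tg_n\|\to2$, so $T\parallel A$; and $A$ attains its norm nowhere because $\|Tv\|<\|v\|$ for every nonzero $v\in H_0^{\perp}$ (otherwise such a $v$ would lie in $M_T=S_{H_0}$). In either case $T\parallel A$ while $M_T\cap M_A=\emptyset$, contradicting $(i)$.

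I expect the main obstacle to be the forward part of $(ii)\Rightarrow(i)$, namely upgrading the almost-norm-attaining sequence $\{x_n\}$ to a genuine common norm-attaining vector $u\in M_T\cap M_A$; this is where both hypotheses in $(ii)$ are indispensable, the strict gap $\|T\|_{H_0^{\perp}}<\|T\|$ driving the components $v_n$ to zero and the finite-dimensionality of $H_0$ supplying the compactness. A secondary subtlety, which distinguishes this theorem from Theorem \ref{theorem:epsilon}, is that for honest norm-parallelism the operators constructed in $(i)\Rightarrow(ii)$ must be parallel to $T$ yet attain their norm nowhere, rather than the non-parallel operators used in the $\epsilon$-parallel setting.
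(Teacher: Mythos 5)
Your proposal is correct, and it differs from the paper's proof in two genuine ways. For $(ii)\Rightarrow(i)$ the paper simply cites \cite[Th. 2.18]{ZM}, whereas you give a self-contained argument: the decomposition $x_n=u_n+v_n$ along $H_0\oplus H_0^{\perp}$, the identity $T^{*}T=I$ on $H_0$ (which indeed follows from the Schwarz-equality step, and which legitimizes both $\langle Tu_n,Tv_n\rangle=0$ and $\|Tu_n\|=\|u_n\|$), the gap $c=\|T\|_{H_0^{\perp}}<1$ forcing $\|v_n\|\to 0$, and compactness of $S_{H_0}$ producing the witness $u$ — this all checks out (your $\limsup$ should really be applied along a subsequence where $\|u_n\|$ converges, but since every subsequential limit of $\|u_n\|$ must equal $1$, this is cosmetic). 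For $(i)\Rightarrow(ii)$ in the infinite-dimensional case, the paper constructs a shift-type operator $Ae_n=Te_{n+1}$ extended by a successor function on a well-ordering of the remaining basis vectors of $H_0$; that operator \emph{does} attain its norm (each $e_n\in M_T\cap M_A$), so the paper must then verify, via a strict Cauchy--Schwarz estimate $|\langle Tx,Ax\rangle|<\|Tx\|\,\|Ax\|$, that $Tx\not\parallel Ax$ for every $x\in M_T\cap M_A$. Your diagonal operator $Ax=\sum_n c_n\langle x,e_n\rangle Te_n$ with $c_n\uparrow 1$, $c_n<1$, sidesteps all of this: since $\|Ax\|^2=\sum_n c_n^2|\langle x,e_n\rangle|^2<1$ for every unit vector, $M_A=\emptyset$ and the failure of $(i)$ is immediate, with no transfinite bookkeeping and no pointwise estimate. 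In the case $\|T\|_{H_0^{\perp}}=\|T\|$ your operator $A=T(I-P)$ is literally the paper's construction (the paper argues $M_T\cap M_A=\emptyset$ via $A|_{H_0}=0$, you argue the slightly stronger $M_A=\emptyset$; both are valid, and both cover the degenerate case $H_0=\{0\}$). On balance, your route is more elementary and self-contained; what the paper's choices buy is brevity in $(ii)\Rightarrow(i)$ (by citation) and, in the shift example, the somewhat stronger phenomenon of parallel operators sharing norm-attaining points at which pointwise parallelism nevertheless fails — information your $M_A=\emptyset$ shortcut does not provide but the theorem does not need.
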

\begin{proof} 
  $(ii)\Rightarrow (i)$ follows from  \cite[Th. 2.18]{ZM}.\\
	We only prove $(i)\Rightarrow (ii).$ Without loss of generality assume that $\|T\|=1$. From \cite[Th. 2.2]{SP}, we have, $M_T=S_{H_0}$, where $H_0$ is a subspace of $\mathbb{H}$. We first show that $H_0$ is finite-dimensional. If possible, suppose that $H_0$ is infinite-dimensional. Then there exists a sequence $\{e_n:n\in \mathbb{N}\}$ of orthonormal vectors in $H_0$. Extend this sequence to a complete orthonormal basis $\mathcal{B}=\{e_\alpha:\alpha \in \Lambda\supseteq \mathbb{N}\}$ of $\mathbb{H}.$  For each $e_\alpha \in H_0 \cap \mathcal{B} $ we have 
$ \| T^{*}T \| = \|T\|^2 = \|Te_\alpha \|^2 = \langle T^{*}T e_\alpha , e_\alpha \rangle \leq \|T^{*}Te_\alpha \| \|e_\alpha \| \leq \|T^{*}T\|, $
so that by the equality condition of Schwarz's inequality we get $ T^{*}Te_\alpha = \lambda_\alpha e_\alpha $ for some scalar $\lambda_\alpha.$ 
Thus, $\{ Te_\alpha : e_\alpha \in H_0 \cap \mathcal{B}\}$ is a set of orthonormal vectors in $\mathbb{H}.$ Let $$\Lambda_1=\{\alpha \in \Lambda:e_\alpha \in (H_0\cap\mathcal{B})\setminus \{e_n:n\in \mathbb{N}\}\},~~\Lambda_2=\{\alpha \in \Lambda:e_\alpha \in \mathcal{B}\setminus H_0\}.$$ 
	If $ \Lambda_1 \neq \emptyset $, then $\Lambda_1$ can be well-ordered. Let $\alpha_0$ be the least element of $\Lambda_1$ and for any $\alpha \in \Lambda_1,~s(\alpha)$ be the successor of $\alpha$. If $\Lambda_1$ has a greatest element say, $\beta$, then define $s(\beta)=\alpha_0$. Now, define $A:\mathcal{B}\longrightarrow \mathbb{H}$ as follows:
	\begin{eqnarray*}
		Ae_n&=&Te_{n+1},~\forall ~ n\in \mathbb{N}\\
		Ae_\alpha&=& Te_{s(\alpha)},~ \forall ~ \alpha \in \Lambda_1\\
		Ae_\alpha&=&0,~\forall ~\alpha \in \Lambda_2.
	\end{eqnarray*}  
	Since  $\{Te_\alpha:e_\alpha \in H_0 \cap \mathcal{B}\}$ is a set of orthonormal vectors, $A$ can be extended to a bounded linear operator on $\mathbb{H}$.\\
	Next, we show that $T\parallel A$ but there does not exist any  $x\in M_T \cap M_A$ such that $Tx \parallel Ax$. Let $x=\Sigma_{n \in \mathbb{N}}\langle x,e_n \rangle e_n+\Sigma_{\alpha \in \Lambda_1}\langle x,e_\alpha \rangle e_\alpha+\Sigma_{\alpha \in \Lambda_2}\langle x,e_\alpha \rangle e_\alpha$. Then $Ax=\Sigma_{n \in \mathbb{N}}\langle x,e_n \rangle Te_{n+1}+\Sigma_{\alpha \in \Lambda_1}\langle x,e_\alpha \rangle Te_{s(\alpha)}$. Therefore, $\|Ax\|^2=\Sigma_{n \in \mathbb{N}}|\langle x,e_n \rangle|^2+\Sigma_{\alpha \in \Lambda_1}|\langle x,e_\alpha \rangle|^2\leq \|x\|^2$. Thus, $\|A\|\leq 1$. Again, $\|Ae_n\|=\|Te_{n+1}\|=1$ gives that $\|A\|=1$. Now, let $x_n=\frac{1}{\sqrt{n}}(e_1+e_2+\ldots +e_n)$. Then $x_n \in S_{\mathbb{H}}$ for all $n\in \mathbb{N}$. Now, $(T+A)x_n=\frac{1}{\sqrt{n}}(Te_1+2Te_2+2Te_3+\ldots +2Te_n+Te_{n+1})$. Therefore, $\|(T+A)x_n\|^2=\frac{1}{n}[2+4(n-1)]\Longrightarrow \|(T+A)x_n\|\longrightarrow 2$. Thus, we have, $2\leq \|T+A\|\leq \|T\|+\|A\|=2 \Longrightarrow \|T+A\|=\|T\|+\|A\|$. This gives that $T \parallel A$.\\
	Next, let $x\in M_T \cap M_A$. Let $x=\Sigma_{n \in \mathbb{N}}\langle x,e_n \rangle e_n+\Sigma_{\alpha \in \Lambda_1}\langle x,e_\alpha \rangle e_\alpha$. Let $n_0$ be the least positive integer such that $\langle x,e_{n_0} \rangle \neq 0$. Then $\|x\|=1\Longrightarrow \Sigma_{n=n_0}^{\infty}|\langle x,e_n \rangle|^2 +\Sigma_{\alpha \in \Lambda_1}|\langle x,e_\alpha \rangle|^2 =1 $. Now,
	\begin{eqnarray*}
		&&|\langle Tx, Ax\rangle|\\
		&=&|\langle \Sigma_{n=n_0}^{\infty}\langle x,e_n \rangle Te_n+\Sigma_{\alpha \in \Lambda_1}\langle x,e_\alpha \rangle Te_\alpha,\Sigma_{n =n_0}^{\infty}\langle x,e_n \rangle Te_{n+1}+\Sigma_{\alpha \in \Lambda_1}\langle x,e_\alpha \rangle Te_{s(\alpha)}\rangle|\\
		&=&|\Sigma_{n=n_0}^{\infty}\overline{\langle x,e_n\rangle}\langle x, e_{n+1}\rangle+ \Sigma_{\alpha \in \Lambda_1}\overline{\langle x,e_\alpha\rangle}\langle x, e_{s(\alpha)}\rangle|\\
		&\leq & \{\Sigma_{n=n_0}^{\infty}|\langle x,e_n\rangle|^2+\Sigma_{\alpha \in \Lambda_1}|\langle x,e_\alpha\rangle|^2\}^{\frac{1}{2}}\{\Sigma_{n=n_0+1}^{\infty}|\langle x,e_n\rangle|^2+\Sigma_{\alpha \in \Lambda_1}|\langle x,e_{s(\alpha)}\rangle|^2\}^{\frac{1}{2}}\\
		&<&1=\|Tx\|\|Ax\|.
	\end{eqnarray*}
	Thus, $ T \parallel A$ but there exists no $x \in M_T \cap M_A$ such that $ Tx \parallel Ax.$ \\	
	If $ \Lambda_1 = \emptyset $ then define $ A:\mathcal{B}\longrightarrow \mathbb{H} $ as follows:
	\begin{eqnarray*}
		Ae_n&=&Te_{n+1},~\forall ~ n\in \mathbb{N}\\
		Ae_\alpha&=&0,~\forall ~\alpha \in \Lambda_2.
	\end{eqnarray*}
	Proceeding as before, we can show that $ T \parallel A$ but there exists no $x \in M_T \cap M_A$ such that $ Tx \parallel Ax.$ Therefore, $H_0$ must be finite-dimensional subspace of $\mathbb{H}$.\\
	
	Next, we show that $\|T\|_{H_0^{\perp}}< 1$. If possible, suppose that $\|T\|_{H_0^{\perp}}= 1$. Then there exists a sequence $\{x_n\}$ in $S_{H_0^{\perp}}$ such that $\|Tx_n\|\longrightarrow 1$. Define $A:\mathbb{H}\longrightarrow \mathbb{H}$ by $Az=Ty,$ where $z=x+y,~x\in H_0,~y\in H_0^{\perp}$.
	Now, $\|z\|=\|x+y\|=1\Longrightarrow \|y\|\leq 1$. Thus, $\|Az\|=\|Ty\|\leq 1$. Again, $\|Ax_n\|=\|Tx_n\|\longrightarrow 1$ gives that $\|A\|=1$. Now, $\|(T+A)x_n\|=2\|Tx_n\|\longrightarrow 2$. Therefore, $2 \leq \|T+A\|\leq \|T\|+\|A\|=2$ and so $\|T+A\|=\|T\|+\|A\|$, i.e., $T \parallel A$. From the construction of $A$ it follows that $ Ax =0, $ if $ x \in M_T$. Since $A$ is non-zero it follows that  $M_T \cap M_A =\emptyset $ and so there does not exist any $x \in M_T \cap M_A$ such that $Tx \parallel Ax$. This is in clear contradiction with the hypothesis. Therefore,  we must have, $\|T\|_{H_0^{\perp}}< 1$. This establishes the theorem.
\end{proof}

\begin{remark}
We note   that $T\parallel A$ does not imply $T\parallel^{\epsilon}A$ with $\epsilon=0.$ This difference between $T\parallel A$ and $T\parallel^{\epsilon}A$ with $\epsilon=0$ justifies the fact that conditions $(i)$ and $(ii)$ in Theorem \ref{theorem:epsilon} are not equivalent for $\epsilon=0$, whereas conditions $(i)$ and $(ii)$ in Theorem \ref{theorem:hil} are equivalent.
\end{remark}

\section{Birkhoff-James orthogonality of bounded linear operators}
We begin this section with an easy proposition on approximate Birkhoff-James orthogonality ($\perp_D^{\epsilon}$).
\begin{prop}
		Let $\mathbb{X},~\mathbb{Y}$ be two normed linear spaces. Let $T \in\mathbb{B}({\mathbb{X},\mathbb{Y}})$ and $x \in M_T$. Then for any $\epsilon \in [0,1)$ and $y \in \mathbb{X}$, $Tx \perp_D^{\epsilon} Ty \Longrightarrow x \perp_D^{\epsilon} y$. 
\end{prop}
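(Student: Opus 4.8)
The plan is to run exactly the argument used in the opening Proposition of Section~2, but now reading the boundedness of $T$ ``in the other direction''. First I would unwind the hypothesis: by Chmieli\'nski's definition, $Tx \perp_D^{\epsilon} Ty$ says precisely that
\[
\|Tx + \lambda Ty\| \geq \sqrt{1-\epsilon^2}\,\|Tx\| \quad \text{for every scalar } \lambda \in K.
\]
Since $x \in M_T$, we have $\|Tx\| = \|T\|$ and $\|x\| = 1$, so the right-hand side is simply $\sqrt{1-\epsilon^2}\,\|T\|$. The goal is to produce the corresponding lower bound for $\|x + \lambda y\|$, and the one link I have at my disposal is the submultiplicative estimate $\|T(x+\lambda y)\| \leq \|T\|\,\|x+\lambda y\|$.

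The key (and essentially only) step is then the two-sided chain, valid for every $\lambda \in K$:
\[
\sqrt{1-\epsilon^2}\,\|T\| = \sqrt{1-\epsilon^2}\,\|Tx\| \leq \|Tx + \lambda Ty\| = \|T(x+\lambda y)\| \leq \|T\|\,\|x+\lambda y\|,
\]
where the middle inequality is the hypothesis $Tx \perp_D^{\epsilon} Ty$ and the last is boundedness of $T$. Dividing through by $\|T\|$ gives $\|x + \lambda y\| \geq \sqrt{1-\epsilon^2} = \sqrt{1-\epsilon^2}\,\|x\|$, and since $\lambda$ was arbitrary this is exactly the assertion $x \perp_D^{\epsilon} y$.

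I do not expect any genuine obstacle here: the entire content is the single inequality chain above, and the proof parallels that of Proposition~2.1. The only point I would flag explicitly is the degenerate case $T = 0$, in which the hypothesis carries no information about $x$ and $y$ (and one cannot divide by $\|T\|$); I would handle this at the outset by observing that the statement is intended for nonzero $T$, so that $\|T\| > 0$ and the division is legitimate.
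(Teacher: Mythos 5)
Your proof is correct and follows essentially the same route as the paper's: the single chain $\sqrt{1-\epsilon^2}\,\|T\|\,\|x\| = \sqrt{1-\epsilon^2}\,\|Tx\| \leq \|T(x+\lambda y)\| \leq \|T\|\,\|x+\lambda y\|$ followed by division by $\|T\|$, which is exactly the paper's argument. Your explicit flagging of the degenerate case $T=0$ (where the division fails, and indeed the statement itself degenerates) is a point the paper leaves tacit, but it does not change the substance.
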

\begin{proof}
	Let $Tx \perp_D^{\epsilon}Ty$. Then for any $\lambda\in K$, $\|Tx+\lambda Ty\|\geq \sqrt{1-{\epsilon}^2} \|Tx\|$. Therefore, for any $\lambda \in K$, $\|T\|\|x+\lambda y\|\geq \|Tx+\lambda Ty\| \geq \sqrt{1-{\epsilon}^2} \|Tx\|=\sqrt{1-{\epsilon}^2} \|T\|\|x\|$. Thus, for any $\lambda \in K$, $\|x+\lambda y\|\geq \sqrt{1-{\epsilon}^2}\|x\|$. Therefore, $x \perp_D^{\epsilon}y$. 
\end{proof}

In \cite[Th. 2.1]{J}, James characterized Birkhoff-James orthogonality in terms of linear functionals. In the next lemma, we characterize approximate Birkhoff-James orthogonality ($\perp_D^{\epsilon}$) in terms of linear functionals.
\begin{lemma}
	Let $\mathbb{X}$ be a normed linear space and $x,y\in\mathbb{X}$. Then for any $\epsilon \in [0,1)$, $x\perp_D^{\epsilon}y$ if and only if  there exists a linear functional $f \in S_{\mathbb{X}^*}$ such that $f(x)\geq \sqrt{1-{\epsilon}^2} \|x\|$ and $f(y)=0$.
\end{lemma}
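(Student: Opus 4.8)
The plan is to mirror James's functional characterization of Birkhoff--James orthogonality \cite[Th.~2.1]{J}, replacing the role of $\|x\|$ by the distance $d:=\inf\{\|x+\lambda y\|:\lambda\in K\}$ from $x$ to the line spanned by $y$, in the same spirit as the proof scheme behind Lemma \ref{theorem:epsilonparallel}. The sufficiency is the routine direction: if $f\in S_{\mathbb{X}^*}$ satisfies $f(x)\geq\sqrt{1-\epsilon^2}\|x\|$ and $f(y)=0$, then for every $\lambda\in K$ we have $\|x+\lambda y\|\geq|f(x+\lambda y)|=|f(x)|=f(x)\geq\sqrt{1-\epsilon^2}\|x\|$, where $|f(x)|=f(x)$ because $f(x)\geq 0$. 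This is exactly the defining inequality for $x\perp_D^{\epsilon}y$.

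For the necessity I would assume $x\perp_D^{\epsilon}y$, i.e. $d\geq\sqrt{1-\epsilon^2}\|x\|$, and treat the degenerate case $x=\theta$ as trivial, so suppose $x\neq\theta$. If $y=\theta$, any Hahn--Banach norming functional $f$ for $x$ works, since then $f(x)=\|x\|\geq\sqrt{1-\epsilon^2}\|x\|$ and $f(y)=0$. If $y\neq\theta$, observe first that $x$ and $y$ must be linearly independent: if $x\in\mathrm{span}\{y\}$ then $d=0$, contradicting $d\geq\sqrt{1-\epsilon^2}\|x\|>0$ (here $\sqrt{1-\epsilon^2}>0$ as $\epsilon\in[0,1)$). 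On the two-dimensional subspace $Y_0=\mathrm{span}\{x,y\}$ I would define $g(\alpha x+\beta y)=\alpha d$, which is well-defined and linear by independence and satisfies $g(y)=0$, $g(x)=d$. The key estimate is $|g(\alpha x+\beta y)|=|\alpha|\,d\leq|\alpha|\,\|x+(\beta/\alpha)y\|=\|\alpha x+\beta y\|$ for $\alpha\neq0$ (and trivially for $\alpha=0$), giving $\|g\|_{Y_0}\leq 1$; extending $g$ by Hahn--Banach to $f\in\mathbb{X}^*$ with $\|f\|=\|g\|_{Y_0}$ then yields $f(y)=0$ and $f(x)=d\geq\sqrt{1-\epsilon^2}\|x\|$.

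The main obstacle is the necessity direction, and within it the delicate point is checking that the constructed functional has norm \emph{exactly} $1$, so that $f\in S_{\mathbb{X}^*}$ rather than merely $\|f\|\leq 1$. This is where I would use that $d$ is the infimum of $\|x+\lambda y\|$ and not just some lower bound: choosing $\beta_n$ with $\|x+\beta_n y\|\to d$ gives $g(x+\beta_n y)/\|x+\beta_n y\|=d/\|x+\beta_n y\|\to 1$, so $\|g\|_{Y_0}=1$ and hence $\|f\|=1$. I note also that no phase adjustment is needed: since $d\geq 0$ is real, defining $g(x)=d$ directly produces a nonnegative value $f(x)=d$, so the required inequality $f(x)\geq\sqrt{1-\epsilon^2}\|x\|$ holds with $f(x)$ real and nonnegative, matching the form of the statement.
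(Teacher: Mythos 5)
Your proposal is correct and takes essentially the same route as the paper: your sufficiency argument is word-for-word the one given there, and your necessity construction (the functional $g(\alpha x+\beta y)=\alpha d$ on $\mathrm{span}\{x,y\}$, with $\|g\|=1$ obtained from a minimizing sequence $\|x+\beta_n y\|\to d>0$, followed by a norm-preserving Hahn--Banach extension) is precisely the standard proof of \cite[Cor.~6.8]{Con}, which the paper simply cites for that direction. You correctly identify where the hypotheses enter --- $\epsilon<1$ and $x\neq\theta$ force $d>0$, giving both the linear independence of $x,y$ and the equality $\|f\|=1$ --- so your write-up just fills in the details the paper outsources to the citation.
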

\begin{proof}
The necessary part follows from a slight variation of \cite[Cor. 6.8]{Con}. Let us prove the sufficient part of the lemma. Suppose there exists a linear functional $f \in S_{\mathbb{X}^*}$ such that $f(x)\geq \sqrt{1-{\epsilon}^2} \|x\|$ and $f(y)=0$. Then for any $\lambda \in K$, $\sqrt{1-{\epsilon}^2} \|x\|\leq f(x)=f(x+\lambda y)\leq \|f\|\|x+\lambda y\|=\|x+\lambda y\|$. Therefore, $x\perp_D^{\epsilon}y$.
\end{proof}

The idea of the following theorem is adapted from \cite[Th. 2.1]{J}. However, for the sake of completeness, we present a complete proof of the result.

\begin{theorem}
	Let $\mathbb{X}$ be a normed linear space. Let $x$ be a nonzero element in $ \mathbb{X}$ and $H$ be a hyperspace in $\mathbb{X}$. Then for $\epsilon \in [0,1)$, $x\perp_D^{\epsilon}H$ if and only if there exists $f\in S_{\mathbb{X}^*}$ such that $f(x)\geq \sqrt{1-{\epsilon}^2}\|x\|$, where $H=ker(f)$.
\end{theorem}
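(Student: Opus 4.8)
The plan is to mirror the proof of James's classical characterization, replacing the exact identity $f(x)=\|x\|$ by the inequality $f(x)\geq\sqrt{1-\epsilon^2}\|x\|$ and tracking the distance from $x$ to $H$ in place of $\|x\|$. The sufficiency is immediate: if $f\in S_{\mathbb{X}^*}$ satisfies $f(x)\geq\sqrt{1-\epsilon^2}\|x\|$ and $H=ker(f)$, then for every $h\in H$ we have $f(h)=0$, so the sufficiency half of the preceding lemma (applied with $y=h$) gives $x\perp_D^{\epsilon}h$; as $h\in H$ was arbitrary, $x\perp_D^{\epsilon}H$.

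For the necessity, suppose $x\perp_D^{\epsilon}H$ and set $d=\inf_{h\in H}\|x+h\|$. Since $H$ is a subspace, $\lambda h$ runs over $H$ as $h$ runs over $H$ and $\lambda$ over $K$, so the hypothesis is precisely $\|x+h\|\geq\sqrt{1-\epsilon^2}\|x\|$ for all $h\in H$, whence $d\geq\sqrt{1-\epsilon^2}\|x\|>0$ (using $x\neq\theta$ and $\epsilon<1$). In particular $x\notin H$, and because $H$ is a hyperspace every $z\in\mathbb{X}$ has a unique representation $z=\alpha x+h$ with $\alpha\in K$ and $h\in H$. I would then define $f\colon\mathbb{X}\to K$ by $f(\alpha x+h)=\alpha d$; this is linear, satisfies $ker(f)=H$, and $f(x)=d\geq\sqrt{1-\epsilon^2}\|x\|$.

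It remains to verify that $f\in S_{\mathbb{X}^*}$. For $\|f\|\leq 1$, I would estimate, for $\alpha\neq 0$, that $|f(\alpha x+h)|=|\alpha|d\leq|\alpha|\,\|x+\alpha^{-1}h\|=\|\alpha x+h\|$, using $\alpha^{-1}h\in H$ and the definition of $d$ as an infimum; the case $\alpha=0$ is trivial. For $\|f\|\geq 1$, I would choose a sequence $\{h_n\}\subset H$ with $\|x+h_n\|\to d$ and evaluate $f$ on the unit vectors $(x+h_n)/\|x+h_n\|$, obtaining values $d/\|x+h_n\|\to 1$. Combining these yields $\|f\|=1$, so $f$ is the required functional.

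The proof is essentially routine once the distance $d$ is introduced; the only genuine content is the norm computation $\|f\|=1$ together with the observation that $d>0$. The latter is what guarantees $f\neq\theta$, hence that $ker(f)$ is a genuine hyperspace equal to $H$ rather than all of $\mathbb{X}$. I should note that no separate appeal to the Hahn--Banach theorem is needed, since $f$ is produced explicitly on the algebraic complement of $H$; the boundedness estimate then shows $f$ is continuous, and consequently $H=ker(f)$ is automatically closed, so the hypothesis $x\perp_D^{\epsilon}H$ silently forces closedness of $H$ without causing any difficulty in the argument.
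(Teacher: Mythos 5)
Your proof is correct and takes essentially the same route as the paper: both construct the functional explicitly on the decomposition $\mathbb{X}=\mathrm{span}\{x\}\oplus H$, and your $f(\alpha x+h)=\alpha d$ is exactly the paper's normalized functional $g/\|g\|$, where $g(\alpha x+h)=\alpha$ satisfies $\|g\|=1/d$. The only cosmetic differences are that you compute $\|f\|=1$ directly via a minimizing sequence and record $f(x)=d$ exactly, whereas the paper just bounds $\|g\|\leq 1/(\sqrt{1-\epsilon^2}\,\|x\|)$ and normalizes; your closing observation that the hypothesis forces $H$ to be closed is correct but not needed for the argument.
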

\begin{proof}
	First suppose that there exists $f\in S_{\mathbb{X}^*}$ such that $f(x)\geq \sqrt{1-{\epsilon}^2}\|x\|$, where $H=ker(f)$. Then for any $h\in H$ and for any $\lambda \in K$, $$\sqrt{1-{\epsilon}^2}\|x\| \leq f(x)=f(x+\lambda h)\leq \|f\|\|x+\lambda h\|=\|x+\lambda h\|.$$ This implies that $\sqrt{1-{\epsilon}^2}\|x\|\leq\|x+\lambda h\|$ for any $h\in H$ and for any $\lambda \in K$. Thus, $x\perp_D^{\epsilon}H$.\\
	Conversely, suppose that $x\perp_D^{\epsilon}H$. Then it is easy to check that $\mathbb{X}=span\{x,H\}$. Define $g:\mathbb{X}\longrightarrow K$ by $g(ax+h)=a$, where $a\in K$ and $h\in H$. Clearly $g$ is linear and $ker(g)=H$. Now, $|g(ax+h)|=|a|\leq\frac{\|ax+h\|}{\sqrt{1-{\epsilon}^2}\|x\|}$, since $x\perp^{\epsilon}_D h$. Thus, $\|g\|\leq \frac{1}{\sqrt{1-{\epsilon}^2}\|x\|}\Rightarrow \sqrt{1-{\epsilon}^2}\|g\|\|x\|\leq 1=g(x)$. Let $f=\frac{1}{\|g\|}g$. Then clearly, $f\in S_{\mathbb{X}^*}$, $f(x)\geq \sqrt{1-{\epsilon}^2}\|x\|$ and $H=ker (g)=ker (f)$. This establishes the theorem.
\end{proof}

Our next objective is to study Birkhoff-James orthogonality in the space of bounded linear operators by classifying them into two different cases:  ``$T\perp_B A$ but $T\not\perp_{SB} A$'' and ``$T\perp_{SB} A$''. In \cite[Th. 2.1]{SP}, Sain and Paul proved that if $T$ is a bounded linear operator on a finite-dimensional real Banach space $\mathbb{X}$ and $D$ is a non empty connected subset of $S_{\mathbb{X}}$ such that $M_T=D\cup(-D)$, then for any $A\in \mathbb{B}(\mathbb{X})$, $T\perp_B A$ if and only if there exists $x\in D$ such that $Tx\perp_B Ax$. Note that, if there exists $x\in D$ such that $Tx\perp_{SB}Ax$ then for any $\lambda \in K\setminus \{0\}$, $\|T+\lambda A\|\geq \|(T+\lambda A)x\|>\|Tx\|=\|T\|$, i.e., $T\perp_{SB}A.$ However, the following example illustrates that if $T\perp_{SB}A$ then there may not exist any $x\in D$ such that $Tx\perp_{SB}Ax$.
\begin{example}\label{example:DU-D}
	Consider the two-dimensional real normed linear space $\mathbb{X}$ such that $S_{\mathbb{X}}=\{(x,y)\in\mathbb{R}^2:\mid x\mid=1,~\mid y\mid\leq 1\}\cup\{(x,y)\in\mathbb{R}^2:x^2+(y-1)^2=1, ~1\leq y\leq 2\}\cup\{(x,y)\in\mathbb{R}^2:x^2+(y+1)^2=1, ~-2\leq y\leq -1\} $.  Define $T,~A\in\mathbb{B}(\mathbb{X})$ by $T(1,1)=(1,1),~T(0,2)=(0,0)$ and $A(1,1)=(0,2),~A(0,2)=(1,-1)$. Now, it is easy to observe that $\|T\|=1$ and $M_T=D\cup(-D)$, where $D=\{(1,y):-1\leq y\leq 1\}$. Let $\lambda >0$. Then $\|T+\lambda A\|\geq \|(T+\lambda A)(1,1)\|=\|(1,1+2 \lambda)\|>1=\|T\|$. Next, let $\lambda <0$. Then $\|T+\lambda A\|\geq \|(T+\lambda A)(1,0)\|=\|(1-\frac{\lambda}{2},1+\frac{5 \lambda}{2})\|>1=\|T\|$. Therefore, $T\perp_{SB}A$. Now, let $(1,y)\in D$. Then $T(1,y)=(1,1)$. Clearly, there does not exist any $(x_1,y_1)\in \mathbb{X}$ such that $(1,1)\perp_{SB}(x_1,y_1)$. Therefore, $T(1,y)\not\perp_{SB}A(1,y)$ for any $(1,y)\in D$.
\end{example} 

In the following theorem, we characterize strong Birkhoff-James orthogonality ($T\perp_{SB}A$) in the space of all bounded linear operators between finite-dimensional Banach spaces.
\begin{theorem}
	Let $\mathbb{X},~\mathbb{Y}$ be finite-dimensional Banach spaces. Let $T,~A \in \mathbb{B}(\mathbb{X},\mathbb{Y})$. Then $T \perp_{SB}A$ if and only if for any $\epsilon>0$, there exists $\lambda_{\epsilon}>0$ such that for each $|\lambda|<\lambda_{\epsilon}$, there exists $y_\lambda \in (\cup_{x\in M_T}B(x,\epsilon))\cap S_{\mathbb{X}}$ such that $\|Ty_\lambda +\lambda Ay_\lambda\|>\|T\|$.
\end{theorem}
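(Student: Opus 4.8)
The plan is to prove the two implications separately. The reverse implication (sufficiency of the condition) will be the easy one, resting only on the convexity of the scalar function $\lambda \mapsto \|T+\lambda A\|$, whereas the forward implication (necessity) is the substantive part and will rely on the compactness of $S_{\mathbb{X}}$ together with a limiting argument showing that the norm attainment points of $T+\lambda A$ cluster near $M_T$ as $\lambda\to 0$. Throughout I read the quantifier ``for each $|\lambda|<\lambda_\epsilon$'' as referring to $0<|\lambda|<\lambda_\epsilon$, since for $\lambda=0$ the required inequality $\|Ty_0\|>\|T\|$ can never hold.

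For the necessity, suppose $T\perp_{SB}A$ and fix $\epsilon>0$. Since $\mathbb{X}$ is finite-dimensional, $S_{\mathbb{X}}$ is compact, so for every $\lambda\neq 0$ the operator $T+\lambda A$ attains its norm at some $z_\lambda\in S_{\mathbb{X}}$, and strong orthogonality gives $\|(T+\lambda A)z_\lambda\|=\|T+\lambda A\|>\|T\|$. Thus it suffices to show that for all sufficiently small $|\lambda|$ one may choose such an attainment point inside $\cup_{x\in M_T}B(x,\epsilon)$, for then $y_\lambda=z_\lambda$ does the job. I would argue this by contradiction: if it fails, there is a sequence $\lambda_n\to 0$ with $\lambda_n\neq 0$ such that every norm attainment point $z_n$ of $T+\lambda_n A$ satisfies $\mathrm{dist}(z_n,M_T)\geq \epsilon$. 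Passing to a convergent subsequence $z_n\to z\in S_{\mathbb{X}}$ (again by compactness), the estimate $\|(T+\lambda_n A)z_n-Tz\|\leq |\lambda_n|\|A\|+\|T\|\|z_n-z\|\to 0$, together with the operator-norm continuity $\|T+\lambda_n A\|\to \|T\|$, forces $\|Tz\|=\|T\|$, i.e.\ $z\in M_T$. As $M_T$ is closed, this contradicts $\mathrm{dist}(z,M_T)\geq \epsilon>0$, completing the necessity.

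For the sufficiency, assume the displayed condition and apply it with, say, $\epsilon=1$ to obtain $\lambda_1>0$ such that for every $\lambda$ with $0<|\lambda|<\lambda_1$ there is $y_\lambda\in S_{\mathbb{X}}$ with $\|T+\lambda A\|\geq \|(T+\lambda A)y_\lambda\|>\|T\|$; here the neighbourhood of $M_T$ plays no role. Hence $\phi(\lambda)=\|T+\lambda A\|$ satisfies $\phi(\lambda)>\phi(0)$ on the punctured disc $0<|\lambda|<\lambda_1$. To upgrade this to all $\lambda\neq 0$, I would fix $\mu\neq 0$, write $\mu=|\mu|e^{i\theta}$ (with $\theta\in\{0,\pi\}$ in the real case), and restrict $\phi$ to the ray $t\mapsto \psi(t)=\|T+te^{i\theta}A\|$, which is convex in $t\in\mathbb{R}$ as a norm of an affine function. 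Choosing $s\in(0,\lambda_1)$ and writing $s$ as the convex combination $\tfrac{s}{|\mu|}|\mu|+(1-\tfrac{s}{|\mu|})\cdot 0$, convexity yields $\psi(s)-\psi(0)\leq \tfrac{s}{|\mu|}(\psi(|\mu|)-\psi(0))$; since the left side is strictly positive, so is $\psi(|\mu|)-\psi(0)$, that is $\|T+\mu A\|>\|T\|$. As $\mu\neq 0$ was arbitrary, $T\perp_{SB}A$.

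The main obstacle is the necessity direction, specifically the upper semicontinuity of the norm attainment set: one must rule out the possibility that $T+\lambda A$ attains its (strictly larger) norm far from $M_T$ for arbitrarily small $\lambda$. Finite-dimensionality is used twice and essentially, namely to guarantee that $T+\lambda A$ attains its norm and to extract a convergent subsequence of the attainment points, and it is precisely this compactness that lets the limiting argument close the loop.
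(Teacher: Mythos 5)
Your proof is correct, and its overall architecture matches the paper's: sufficiency via convexity of $\lambda\mapsto\|T+\lambda A\|$ (which the paper dispatches with ``by the convexity of the norm function, it is easy to observe'' and you rightly spell out via the ray restrictions $\psi(t)=\|T+te^{i\theta}A\|$), and necessity via compactness of $S_{\mathbb{X}}$ together with norm attainment of $T+\lambda A$ and the strict inequality $\|T+\lambda A\|>\|T\|$ from strong orthogonality. Where you genuinely diverge is in how the key step of the necessity --- that for small $\lambda\neq 0$ the operator $T+\lambda A$ cannot attain its norm off the $\epsilon$-neighbourhood of $M_T$ --- is established. The paper argues directly and quantitatively: since $S_{\mathbb{X}}\setminus\bigcup_{x\in M_T}B(x,\epsilon)$ is compact and meets no point of $M_T$, there is $\delta>0$ with $\sup\{\|Tz\|:z\in S_{\mathbb{X}}\setminus\bigcup_{x\in M_T}B(x,\epsilon)\}<\|T\|-\delta$, whence $\|Tz+\lambda Az\|<\|T\|<\|T+\lambda A\|$ off the neighbourhood for all $|\lambda|<\lambda_\epsilon$ (effectively $\lambda_\epsilon=\delta/\|A\|$), forcing the attainment point inside. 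You instead run a soft sequential-compactness contradiction: attainment points $z_n$ of $T+\lambda_n A$ with $\lambda_n\to 0$ and $\mathrm{dist}(z_n,M_T)\geq\epsilon$ would subconverge to some $z\in M_T$ satisfying $\mathrm{dist}(z,M_T)\geq\epsilon$, which is absurd. Both arguments are sound and use finite-dimensionality in the same two places; the paper's version buys an explicit, constructive $\lambda_\epsilon$, while yours is non-quantitative but avoids the auxiliary gap $\delta$ and makes the underlying upper semicontinuity of the norm-attainment set transparent. A minor point in your favour: you explicitly flag that the quantifier must be read as $0<|\lambda|<\lambda_\epsilon$ (at $\lambda=0$ the strict inequality $\|Ty_0\|>\|T\|$ is impossible), a reading the paper uses only tacitly, since its own step ``$\|T+\lambda A\|>\|T\|$'' likewise requires $\lambda\neq 0$.
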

\begin{proof}
	First, we prove the easier sufficient part of the theorem. \\
	Let $\epsilon>0$ be given. Then there exists $\lambda_{\epsilon}>0$ such that for each $|\lambda|<\lambda_{\epsilon}$, there exists $y_\lambda \in (\cup_{x\in M_T}B(x,\epsilon))\cap S_{\mathbb{X}}$ such that $\|Ty_\lambda +\lambda Ay_\lambda\|>\|T\|$. Therefore, $\|T+\lambda A\|\geq \|Ty_\lambda+\lambda Ay_\lambda\|>\|T\|$. Now, by the convexity of the norm function, it is easy to observe that $T\perp_{SB}A$.\\
	For the necessary part, suppose that $T\perp_{SB}A$. Let $\epsilon >0$ be given. Now, using the compactness of $S_{\mathbb{X}}$, it is easy to observe that $\sup\{\|Tz\|:z\in S_{\mathbb{X}}\setminus (\cup_{x\in M_T}B(x,\epsilon))\}<\|T\|-\delta$ for some $\delta >0$. Let $z\in S_{\mathbb{X}}\setminus (\cup_{x\in M_T}B(x,\epsilon))$. Then $\|Tz\|<\|T\|-\delta$. Therefore, 
	\begin{eqnarray*}
		\|Tz+\lambda Az\|&\leq& \|Tz\|+|\lambda| \|Az\|\\
		& <& \|T\|-\delta +|\lambda|\|A\|\\
		& <& \|T\|~~~~(for ~some ~ |\lambda|<\lambda_{\epsilon}).
	\end{eqnarray*}	
	Now, since $\|T+\lambda A\|>\|T\|$, $(T+\lambda A)$ does not attain norm in $S_{\mathbb{X}}\setminus (\cup_{x\in M_T}B(x,\epsilon)$ for each $|\lambda|<\lambda_{\epsilon}$. Hence there exists $y_\lambda \in S_{\mathbb{X}}\cap (\cup_{x\in M_T}B(x,\epsilon))$ such that $\|Ty_\lambda +\lambda Ay_\lambda\|=\|T+\lambda A\|>\|T\|$. 
	This completes the proof of the theorem. 
\end{proof}  
 
Now, we study the case when ``$T\perp_B A$ but $T\not\perp_{SB}A$'', where $T$ and $A$ are compact linear operators defined from a reflexive Banach space to any normed linear space.
\begin{theorem}\label{theorem:compactortho}
	 Let $\mathbb{X}$ be a reflexive Banach space and $\mathbb{Y}$ be any normed linear space. Let $T,~A\in \mathbb{K}(\mathbb{X},\mathbb{Y})$ be such that $T\perp_B A$ but $T\not\perp_{SB} A$. Then there exists $x\in M_T$ such that $Tx\perp_B Ax$.
\end{theorem}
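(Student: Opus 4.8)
The plan is to reduce the orthogonality statement to the parallelism machinery already developed in Section 2 and then to extract a single norming functional on $\mathbb{Y}$ through an equality case of the triangle inequality. First I would unpack the two hypotheses quantitatively. Since $T\perp_B A$ we have $\|T+\mu A\|\geq\|T\|$ for every scalar $\mu$, whereas $T\not\perp_{SB}A$ supplies a scalar $\lambda_0\neq 0$ with $\|T+\lambda_0 A\|\leq\|T\|$; combining the two gives $\|T+\lambda_0 A\|=\|T\|$, so that $T$ and $T+\lambda_0 A$ are two compact operators of the same norm $\|T\|$.

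Next I would convert this equality into a parallelism of compact operators in order to invoke Remark \ref{remark:compact}. Applying the triangle inequality on one side gives $\|T+(T+\lambda_0 A)\|\leq\|T\|+\|T+\lambda_0 A\|=2\|T\|$, while writing $2T+\lambda_0 A=2\bigl(T+\tfrac{1}{2}\lambda_0 A\bigr)$ and using $T\perp_B A$ gives $\|2T+\lambda_0 A\|\geq 2\|T\|$. Hence $\|T+(T+\lambda_0 A)\|=\|T\|+\|T+\lambda_0 A\|$, which is exactly the hypothesis of Remark \ref{remark:compact} applied to the pair $\{T,\,T+\lambda_0 A\}\subset\mathbb{K}(\mathbb{X},\mathbb{Y})$ with $\lambda=1\in\mathbb{T}$. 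The remark then produces a vector $x\in M_T\cap M_{T+\lambda_0 A}$ with $\|Tx+(T+\lambda_0 A)x\|=\|Tx\|+\|(T+\lambda_0 A)x\|$. In particular $x\in M_T$, so $\|Tx\|=\|T\|$, and $\|(T+\lambda_0 A)x\|=\|T+\lambda_0 A\|=\|T\|$.

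Finally I would produce the orthogonality at this $x$. By the Hahn-Banach theorem, choose $g\in S_{\mathbb{Y}^*}$ attaining its norm at the vector $Tx+(T+\lambda_0 A)x$, so that $g(Tx)+g\bigl((T+\lambda_0 A)x\bigr)=\|Tx+(T+\lambda_0 A)x\|=2\|T\|$. Taking real parts and using $\mathrm{Re}\,g(Tx)\leq\|Tx\|=\|T\|$ together with $\mathrm{Re}\,g\bigl((T+\lambda_0 A)x\bigr)\leq\|T\|$, the fact that these two real parts sum to $2\|T\|$ forces each to equal $\|T\|$; the modulus bounds $|g(Tx)|\leq\|T\|$ and $|g((T+\lambda_0 A)x)|\leq\|T\|$ then force $g(Tx)=\|T\|$ and $g\bigl((T+\lambda_0 A)x\bigr)=\|T\|$. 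Subtracting yields $g(\lambda_0 Ax)=0$, that is $g(Ax)=0$, while $g(Tx)=\|Tx\|$ with $\|g\|=1$. By James's functional characterization of Birkhoff-James orthogonality \cite[Th. 2.1]{J}, this gives $Tx\perp_B Ax$ with $x\in M_T$, completing the argument.

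I expect the delicate points to be the extraction of the witness $x$ and of the functional $g$. A naive attempt---taking a norm-attaining vector of $T+\lambda_0 A$ directly---need not land in $M_T$, which is precisely why routing through the parallelism of $T$ and $T+\lambda_0 A$ is the efficient move: the membership $x\in M_T\cap M_{T+\lambda_0 A}$ is then guaranteed by Remark \ref{remark:compact}, whose applicability rests on the reflexivity of $\mathbb{X}$ and the compactness of the operators (which yield the underlying norm attainment). The second subtlety is obtaining genuine orthogonality rather than orthogonality merely along the direction $\lambda_0 Ax$; the equality case of the triangle inequality applied at the common norming functional $g$ handles the real and complex scalar fields uniformly, so that no separate treatment of $K=\mathbb{C}$ is required. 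As a side remark, the same vector $g$ arises equally well by applying Hahn-Banach at the midpoint operator $T+\tfrac{1}{2}\lambda_0 A$, giving an alternative self-contained route that bypasses Remark \ref{remark:compact}.
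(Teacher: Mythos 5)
Your proposal is correct and follows the same skeleton as the paper's proof: both arguments reduce the hypotheses ``$T\perp_B A$ and $T\not\perp_{SB}A$'' to the operator-level norm equality $\|2T+\lambda_0 A\|=2\|T\|$ (the paper phrases this as $\|T-B\|=\|T\|+\|-B\|$ with $B=-\mu A-T$, so that $T-B=2T+\mu A$), then invoke Remark \ref{remark:compact} to obtain $x\in M_T\cap M_{T+\lambda_0 A}$, and finally conclude via a norming functional $g\in S_{\mathbb{Y}^*}$ together with \cite[Th. 2.1]{J}. The only genuine divergence is in how that norm equality is established: the paper first applies James's theorem in the dual of $\mathbb{K}(\mathbb{X},\mathbb{Y})$ to produce a functional $F$ with $F(T)=\|T\|$ and $F(T+B)=0$, whereas you get it in one line from the homogeneity of Birkhoff--James orthogonality, $\|2T+\lambda_0 A\|=2\bigl\|T+\tfrac{\lambda_0}{2}A\bigr\|\geq 2\|T\|$ --- a clean simplification that bypasses the operator-space dual entirely; your explicit real-part bookkeeping in the final Hahn--Banach step also handles $K=\mathbb{C}$ more carefully than the paper's proof, which tacitly treats $g(Tx)$ and $g(-Bx)$ as real.
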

\begin{proof}
	Without loss of generality, we may and do assume that $\|T\|=1$.
	Since $T\perp_B A$ but $T\not \perp_{SB} A$, there exists $\mu \in K\setminus \{0\}$ such that $\|T+\mu A\|=\|T\|=1$. Let $B=-\mu A -T$. Then $\|B\|=1.$ Now, $T\perp_B A$ gives that $T\perp_B (B+T)$. Therefore, from \cite[Th. 2.1]{J}, we have, there exists $F\in \mathbb{K}(\mathbb{X},\mathbb{Y})^*$ such that $\|F\|=1,~ F(T)=\|T\|$ and $F(T+B)=0$. Clearly, $F(-B)=\|T\|=1=\|-B\|$. Therefore, $\|T\|+\|-B\|=F(T)+F(-B)=F(T-B)\leq \|F\|\|T-B\|=\|T-B\|\leq\|T\|+\|-B\|$. Thus, $\|T-B\|=\|T\|+\|-B\|$. Now, by Remark \ref{remark:compact}, we have, there exists $x \in M_T\cap M_B $ such that $\|Tx-Bx\|=\|Tx\|+\|-Bx\|$. We note that there exists $g\in S_{\mathbb{X}^*}$ such that $g(Tx-Bx)=\|Tx-Bx\|$. Therefore, $g(Tx)+g(-Bx)=g(Tx-Bx)=\|Tx-Bx\|\leq \|Tx\|+\|-Bx\|$. Again, $g(Tx)\leq \|Tx\|$ and $g(-Bx)\leq \|-Bx\|$ gives that $g(Tx)=\|Tx\|=1$ and $g(-Bx)=\|-Bx\|=1$. This implies that $g(Tx+Bx)=0$. Therefore, by \cite[Th. 2.1]{J}, we have, $Tx\perp_B (Tx+Bx)$, i.e., $Tx\perp_B (-\mu Ax)$. Hence $Tx \perp_B Ax$. This completes the proof of the theorem.    
\end{proof}
\begin{remark}
 From the above theorem and \cite[Th. 2.1]{PSG}, it is clear that for $T,A\in\mathbb{K}(\mathbb{X},\mathbb{Y})$, where $\mathbb{X}$ is a reflexive Banach space and $\mathbb{Y}$ is any normed linear space, $T\perp_B A$ implies there exists $x\in M_T$ such that $Tx \perp_B Ax$ if either $T\not\perp_{SB}A$ or $M_T=D\cup(-D)$($D$ is a non-empty compact connected subset of $S_{\mathbb{X}}$).
\end{remark}
Combining Theorem \ref{theorem:compactortho} of this paper and \cite[Th. 2.3]{SPH}, we obtain the following corollary.
\begin{corollary}\label{corollary:2dim}
Let $\mathbb{X}$ be a two-dimensional real normed linear space. Let $T\in \mathbb{B}(\mathbb{X})$. If for any $A\in \mathbb{B}(\mathbb{X})$, $T\perp_B A$ implies that $T \not\perp_{SB} A$ then $M_T$ cannot have more than two components.
\end{corollary}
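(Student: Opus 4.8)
The plan is to argue by contraposition. The negation of the conclusion is that $M_T$ has more than two components, and the negation of the hypothesis is the existence of some $A \in \mathbb{B}(\mathbb{X})$ for which $T \perp_B A$ and, simultaneously, $T \perp_{SB} A$. So I would prove the implication: if $M_T$ has more than two components, then such an operator $A$ exists. Before starting, I would record the two structural facts that unlock the machinery developed earlier. Since $\mathbb{X}$ is two-dimensional, it is in particular a reflexive Banach space, and every $T, A \in \mathbb{B}(\mathbb{X})$ has finite rank and is therefore compact, so $T, A \in \mathbb{K}(\mathbb{X})$. This places us squarely in the hypotheses of Theorem \ref{theorem:compactortho}.

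Assuming now that $M_T$ has more than two components, I would invoke \cite[Th. 2.3]{SPH} to produce an operator $A \in \mathbb{B}(\mathbb{X})$ such that $T \perp_B A$ while $Tx \not\perp_B Ax$ for \emph{every} $x \in M_T$. This is the decisive input supplied by the external reference: once the norm-attainment set breaks into at least three pieces, the Birkhoff--James orthogonality of $T$ to a suitable $A$ can be realized across distinct components of $M_T$, so that no single attainment point witnesses the pointwise orthogonality $Tx \perp_B Ax$.

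With this $A$ in hand, I would finish by applying the contrapositive of Theorem \ref{theorem:compactortho}. We have $T \perp_B A$; if it were also true that $T \not\perp_{SB} A$, then Theorem \ref{theorem:compactortho} would furnish a point $x \in M_T$ with $Tx \perp_B Ax$, contradicting the defining property of the $A$ chosen above. Hence $T \perp_{SB} A$, and we have exhibited an operator $A$ with $T \perp_B A$ and $T \perp_{SB} A$; that is, the hypothesis of the corollary fails. By contraposition, the hypothesis forces $M_T$ to have at most two components, which is the assertion. I expect the entire difficulty to be concentrated in the correct use of \cite[Th. 2.3]{SPH}: the argument stands or falls on being able to convert ``more than two components of $M_T$'' into the existence of an $A$ that is Birkhoff--James orthogonal to $T$ yet admits no pointwise orthogonality on $M_T$. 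Once that operator is produced, the remainder is a routine contrapositive application of the already-established Theorem \ref{theorem:compactortho}, together with the elementary observations that $\mathbb{X}$ is reflexive and that all operators involved are compact.
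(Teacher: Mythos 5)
Your proposal is correct and follows essentially the same route as the paper's own proof: invoke \cite[Th. 2.3]{SPH} to produce $A$ with $T\perp_B A$ but $Tx\not\perp_B Ax$ for all $x\in M_T$, then apply Theorem \ref{theorem:compactortho} (in contrapositive form) to conclude $T\perp_{SB}A$, contradicting the hypothesis. Your explicit remark that a two-dimensional space is reflexive and all its operators compact, which legitimizes the appeal to Theorem \ref{theorem:compactortho}, is a detail the paper leaves implicit but is the same argument.
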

\begin{proof}
Let $T\in \mathbb{B}(\mathbb{X})$. Suppose for any $A\in \mathbb{B}(\mathbb{X})$, $T\perp_B A$ implies that $T \not\perp_{SB} A$. If possible, suppose that $M_T$ has more than two components. Then by \cite[Th. 2.3]{SPH}, we have, there exists $A\in \mathbb{B}(\mathbb{X})$ such that $T\perp_B A$ but $Tx \not\perp_B Ax$ for any $x \in M_T$. Therefore, by Theorem \ref{theorem:compactortho}, we have $T\perp_{SB}A$, which contradicts the hypothesis of the corollary. Thus, $M_T$ cannot have more than two components.
\end{proof}
The following example shows that the converse of Corollary \ref{corollary:2dim} is not true. 
\begin{example}
Consider $T,A\in \mathbb{B}(\ell_{\infty}^2)$ defined by $T(x,y)=(x,x)$ and $A(x,y)=(-x,x)$ respectively. Clearly, $M_T$ has only two components but $T\perp_{SB}A$, since $\|T+\lambda A\|\geq \|(T+\lambda A)(1,1)\|>\|T(1,1)\|=\|T\|$ for all $\lambda \in\mathbb{R}\setminus \{0\}$. 
\end{example}
We further study the case ``$T\perp_B A$ but $T\not\perp_{SB}A$'', where $T$ and $A$ are bounded linear operators defined between any two normed linear spaces. For this, we need the following lemma.  
 \begin{lemma}\label{lemma:sequence}
 Let $\mathbb{X}$ be a normed linear space. Suppose $\{x_n\},~\{y_n\}$ are two bounded sequences of $\mathbb{X}$ such that $$\lim_{n\to\infty}\|x_n +y_n\|=\lim_{n\to\infty}\|x_n\|+\lim_{n\to\infty}\|y_n\|.$$ Then for each $k \in \mathbb{N},$ there exists $f_{n_k}\in S_{\mathbb{X}^*}$ such that $$\lim_{k\to\infty}f_{n_k}(x_{n_k})=\lim_{k\to\infty}\|x_{n_k}\|$$ and  $$\lim_{k\to\infty}f_{n_k}(y_{n_k})=\lim_{k\to\infty}\|y_{n_k}\|.$$	
\end{lemma}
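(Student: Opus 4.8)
The plan is to obtain the functionals directly as supporting (norming) functionals at the vectors $x_n + y_n$, and then to extract a subsequence along which the two required limits hold. First I would invoke the Hahn--Banach theorem to choose, for each $n \in \mathbb{N}$, a functional $g_n \in S_{\mathbb{X}^*}$ with $g_n(x_n + y_n) = \|x_n + y_n\|$. Writing $c_n = g_n(x_n)$ and $d_n = g_n(y_n)$, the defining property gives $c_n + d_n = \|x_n + y_n\|$, while $\|g_n\| = 1$ yields the bounds $|c_n| \le \|x_n\|$ and $|d_n| \le \|y_n\|$.

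Since $\{x_n\}$ and $\{y_n\}$ are bounded, the scalar sequences $\{c_n\}$ and $\{d_n\}$ are bounded in $K$, so by passing to a subsequence $\{n_k\}$ I may assume $c_{n_k} \to c$ and $d_{n_k} \to d$ for some $c, d \in K$. Setting $a = \lim_n \|x_n\|$ and $b = \lim_n \|y_n\|$ (both of which exist by hypothesis, along with $a + b = \lim_n \|x_n + y_n\|$), letting $k \to \infty$ in $c_{n_k} + d_{n_k} = \|x_{n_k} + y_{n_k}\|$ gives $c + d = a + b$.

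The crux is to upgrade this single scalar identity to the two separate equalities $c = a$ and $d = b$; this is the step I expect to require the most care, precisely because the individual sequences $\{g_n(x_n)\}$ need not converge, which is exactly why the statement passes to a subsequence $\{n_k\}$. In the real case the conclusion is immediate: taking limits in $c_{n_k} \le |c_{n_k}| \le \|x_{n_k}\|$ and in the analogous inequality for $d_{n_k}$ gives $c \le a$ and $d \le b$, and together with $c + d = a + b$ these force equality in both. In the complex case I would instead take real parts: from $\operatorname{Re} c_n + \operatorname{Re} d_n = \|x_n + y_n\|$ together with $\operatorname{Re} c_n \le |c_n| \le \|x_n\|$ and the corresponding bound for $d_n$, the same squeezing argument yields $\operatorname{Re} c = a$ and $\operatorname{Re} d = b$; then $a = \operatorname{Re} c \le |c| \le a$ forces $|c| = \operatorname{Re} c = a$, whence $\operatorname{Im} c = 0$ and $c = a$, and similarly $d = b$. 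Taking $f_{n_k} = g_{n_k}$ then completes the argument, since $\lim_k f_{n_k}(x_{n_k}) = c = a = \lim_k \|x_{n_k}\|$ and $\lim_k f_{n_k}(y_{n_k}) = d = b = \lim_k \|y_{n_k}\|$. Apart from this real/imaginary-part bookkeeping, the Hahn--Banach selection and the boundedness of the scalar sequences make the subsequence extraction entirely routine.
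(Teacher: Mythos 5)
Your proof is correct and follows essentially the same route as the paper's: pick norming functionals for $x_n+y_n$ via Hahn--Banach, extract a subsequence along which the bounded scalar sequences $g_n(x_n)$ and $g_n(y_n)$ converge, and squeeze using $c+d=a+b$ together with $|c_{n_k}|\leq\|x_{n_k}\|$ and $|d_{n_k}|\leq\|y_{n_k}\|$. If anything, your explicit real/imaginary-part bookkeeping in the complex case is slightly more careful than the paper's proof, which tacitly treats these scalar sequences as real.
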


\begin{proof}
	By Hahn-Banach Theorem, we have, for each $n \in \mathbb{N}$, there exists $f_n \in S_{\mathbb{X}^*}$ such that $f_n(x_n+y_n)=\|x_n+y_n\|$. Therefore, $\lim_{n\to\infty}f_n(x_n+y_n)=\lim_{n\to\infty}\|x_n+y_n\|$. Since $\{x_n\},~\{y_n\}$ are bounded sequences of $\mathbb{X}$,  $\{f_n(x_n)\},~\{f_n(y_n)\}$ are bounded sequences of $\mathbb{R}$. This proves that $\{f_n(x_n)\},~\{f_n(y_n)\}$ have convergent subsequences. Let $\lim_{k\to\infty}f_{n_k}(x_{n_k})$ and $\lim_{k\to\infty}f_{n_k}(y_{n_k})$ exists. Then 
	\begin{eqnarray*}
	\lim_{k\to\infty}f_{n_k}(x_{n_k})+\lim_{k\to\infty}f_{n_k}(y_{n_k})&=&\lim_{k\to\infty}f_{n_k}(x_{n_k}+y_{n_k})\\
	&=& \lim_{k\to\infty}\|x_{n_k}+y_{n_k}\|\\
	&=& \lim_{k\to\infty}\|x_{n_k}\|+\lim_{k\to\infty}\|y_{n_k}\|.
	\end{eqnarray*}
Again, since $\lim_{k\to\infty}f_{n_k}(x_{n_k})\leq \lim_{k\to\infty}\|x_{n_k}\|$ and $\lim_{k\to\infty}f_{n_k}(y_{n_k}) \leq \lim_{k\to\infty}\|y_{n_k}\|$, we have, $$\lim_{k\to\infty}f_{n_k}(x_{n_k})=\lim_{k\to\infty}\|x_{n_k}\|$$ and  $$\lim_{k\to\infty}f_{n_k}(y_{n_k})=\lim_{k\to\infty}\|y_{n_k}\|.$$	This completes the proof of the lemma. 
\end{proof}

Let us now obtain the desired necessary condition for ``$T\perp_B A$ but $T\not\perp_{SB}A$'', by applying the previous lemma.
\begin{theorem}\label{theorem:boundedortho}
	Let $\mathbb{X},~\mathbb{Y}$ be two normed linear spaces and $T,~A \in \mathbb{B}(\mathbb{X},\mathbb{Y})$. Suppose $T\perp_B A$ but $T\not\perp_{SB}A$. Then either there exists a sequence $\{x_n\}$ in $S_{\mathbb{X}}$ such that $\|Tx_n\|\longrightarrow \|T\|,~ Ax_n \longrightarrow 0$  or there exists a sequence $\{x_n\}$ in $S_{\mathbb{X}}$ and a sequence $\{\epsilon_n\}$ in $\mathbb{R}^+$  such that $\|Tx_n\|\longrightarrow \|T\|,~ \epsilon_n \longrightarrow 0$ and $Tx_n \perp_D^{\epsilon_n} Ax_n$. 
\end{theorem}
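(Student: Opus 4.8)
The plan is to transplant the strategy of Theorem \ref{theorem:compactortho} to the bounded setting, replacing the norm-attaining vector (which compactness plus reflexivity provided there) by a norm-attaining \emph{sequence} coming from Theorem \ref{theorem:bounded}, and then to convert an approximately annihilating functional into the exact functional demanded by the characterization of $\perp_D^{\epsilon}$.

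First I would normalize $\|T\|=1$ and process the hypotheses exactly as before: since $T\perp_B A$ but $T\not\perp_{SB}A$, there is $\mu\in K\setminus\{0\}$ with $\|T+\mu A\|=1$, and setting $B=-\mu A-T$ gives $\|B\|=1$ with $-B=T+\mu A$. Because $T+B=-\mu A$ and $\perp_B$ is homogeneous, $T\perp_B(T+B)$, so James's theorem \cite[Th. 2.1]{J}, applied in the normed space $\mathbb{B}(\mathbb{X},\mathbb{Y})$, yields $F\in S_{\mathbb{B}(\mathbb{X},\mathbb{Y})^*}$ with $F(T)=1$ and $F(T+B)=0$; hence $F(-B)=1=\|-B\|$, and the triangle inequality forces $\|T-B\|=\|T\|+\|-B\|=2$, i.e.\ $T\parallel B$.

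Next, since $\|T-B\|=2$, I would choose $x_n\in S_{\mathbb{X}}$ with $\|(T-B)x_n\|\to 2$ and, after passing to a subsequence along which $\|Tx_n\|$ and $\|Bx_n\|$ converge, squeeze to get $\|Tx_n\|\to 1=\|T\|$, $\|Bx_n\|\to 1$ and $\lim\|Tx_n-Bx_n\|=\lim\|Tx_n\|+\lim\|Bx_n\|$. Applying Lemma \ref{lemma:sequence} to the bounded sequences $\{Tx_n\}$ and $\{-Bx_n\}$ in $\mathbb{Y}$ produces a further subsequence and functionals $g_{n_k}\in S_{\mathbb{Y}^*}$ with $g_{n_k}(Tx_{n_k})\to 1$ and $g_{n_k}(-Bx_{n_k})\to 1$. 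Since $-Bx=Tx+\mu Ax$, subtracting these limits gives $\mu\,g_{n_k}(Ax_{n_k})\to 0$, hence $g_{n_k}(Ax_{n_k})\to 0$. Relabelling, I now have $x_n\in S_{\mathbb{X}}$ and $g_n\in S_{\mathbb{Y}^*}$ with $\|Tx_n\|\to\|T\|$, $g_n(Tx_n)\to 1$ and $g_n(Ax_n)\to 0$.

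Finally I would split into the two advertised alternatives according to whether $\{Ax_n\}$ clusters at $0$. If $\liminf\|Ax_n\|=0$, a subsequence gives $Ax_n\to 0$ with $\|Tx_n\|\to\|T\|$, which is the first alternative. Otherwise $\|Ax_n\|\ge\delta>0$ along a subsequence, and here lies the crux: I must upgrade the approximate annihilation $g_n(Ax_n)\to 0$ to an exact one. I would subtract a small multiple of a supporting functional, putting $h_n=g_n-\tfrac{g_n(Ax_n)}{\|Ax_n\|}\phi_n$ where $\phi_n\in S_{\mathbb{Y}^*}$ supports $Ax_n$; then $h_n(Ax_n)=0$, while $\|Ax_n\|\ge\delta$ and $g_n(Ax_n)\to 0$ force $\|h_n\|\to 1$ and $h_n(Tx_n)\to 1$. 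Normalizing and rotating by a unimodular scalar produces $\tilde h_n\in S_{\mathbb{Y}^*}$ with $\tilde h_n(Ax_n)=0$ and $\tilde h_n(Tx_n)=|\tilde h_n(Tx_n)|\to 1$. Choosing $\epsilon_n>0$ with $\epsilon_n\to 0$ and $\sqrt{1-\epsilon_n^2}\,\|Tx_n\|\le\tilde h_n(Tx_n)$, the functional characterization of $\perp_D^{\epsilon}$ then yields $Tx_n\perp_D^{\epsilon_n}Ax_n$, which is the second alternative. The main obstacle is precisely this last perturbation step: controlling $\|h_n\|$ and $h_n(Tx_n)$ simultaneously requires the lower bound $\|Ax_n\|\ge\delta$, which is exactly why the dichotomy on $\{Ax_n\}$ is unavoidable, and over $\mathbb{C}$ one must additionally rotate the functionals to keep the relevant values real and nonnegative.
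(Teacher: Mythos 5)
Your proposal is correct, and it follows the paper's proof verbatim through its entire skeleton: the normalization $\|T\|=1$, the choice of $\mu$ with $\|T+\mu A\|=1$, the auxiliary operator $B=-\mu A-T$, the James functional $F$ forcing $\|T-B\|=\|T\|+\|-B\|=2$, the norm-attaining sequence from Theorem \ref{theorem:bounded}, the functionals from Lemma \ref{lemma:sequence}, and the dichotomy on whether $\{Ax_n\}$ clusters at $0$. Where you genuinely diverge is the crux step in the second alternative. The paper never upgrades $f_n(Ax_n)\to 0$ to exact annihilation; instead it sets $\epsilon_n^2=1-\bigl(|r_n|-\tfrac{2}{c|\mu|}|r_n-s_n|\bigr)^2$ explicitly and verifies $Tx_n\perp_D^{\epsilon_n}Ax_n$ bare-handed from the definition, splitting into the regimes $|\lambda|\geq\tfrac{2}{c}\|Tx_n\|$ (where $\|Tx_n+\lambda Ax_n\|\geq|\lambda|\,\|Ax_n\|-\|Tx_n\|$ suffices) and $|\lambda|<\tfrac{2}{c}\|Tx_n\|$ (where it evaluates $f_n$ on $Tx_n-\tfrac{\lambda}{\mu}(Tx_n+Bx_n)$). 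You instead perturb $g_n$ by a support functional of $Ax_n$, namely $h_n=g_n-\tfrac{g_n(Ax_n)}{\|Ax_n\|}\phi_n$, to get $h_n(Ax_n)=0$ exactly, then rotate and normalize and invoke the sufficiency direction of the paper's own functional characterization of $\perp_D^{\epsilon}$ --- a lemma the paper proves in Section 3 but, interestingly, never uses in its own proof of this theorem. Your control of $\|h_n-g_n\|\leq|g_n(Ax_n)|/\delta\to 0$ is exactly where the lower bound on $\|Ax_n\|$ enters, so both proofs need the dichotomy for the same reason, and your identification of this as the unavoidable obstruction is accurate. What each route buys: the paper's estimate is self-contained, keeps the functionals untouched, and produces an explicit $\epsilon_n$, at the cost of the somewhat fiddly two-regime case analysis in $\lambda$; yours is more conceptual and modular (approximate orthogonality is certified by an exact annihilating functional), at the cost of an extra Hahn--Banach application and the normalization/rotation bookkeeping over $\mathbb{C}$, which you do handle correctly. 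One cosmetic point, shared with the paper itself (whose $\epsilon_n$ lies in $[0,1)$ rather than $\mathbb{R}^+$): your $\epsilon_n$ could vanish when $\tilde h_n(Tx_n)=\|Tx_n\|$; since $\perp_D^{\epsilon}$ is monotone in $\epsilon$, replacing $\epsilon_n$ by $\max\{\epsilon_n,1/n\}$ repairs this trivially.
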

\begin{proof}
	Without loss of generality assume that $\|T\|=1$. Since $T\perp_B A$ but $T\not\perp_{SB}A$, there exists $\mu \in K\setminus \{0\}$ such that $\|T+\mu A\|=\|T\|=1$.  Let $B=-\mu A -T$. Then $\|B\|=1.$ Now, $T\perp_B A$ gives that $T\perp_B (B+T)$. Therefore, from \cite[Th. 2.1]{J}, we have, there exists $F\in \mathbb{B}(\mathbb{X},\mathbb{Y})^*$ such that $\|F\|=1,~ F(T)=\|T\|$ and $F(T+B)=0$. Clearly, $F(-B)=\|T\|=1=\|-B\|$. Therefore, $\|T\|+\|-B\|=F(T)+F(-B)=F(T-B)\leq \|F\|\|T-B\|=\|T-B\|\leq\|T\|+\|-B\|$. Thus, $\|T-B\|=\|T\|+\|-B\|$. Now, from Theorem \ref{theorem:bounded}, it is easy to observe that there exists a sequence $\{x_n\}$ in $S_{\mathbb{X}}$ such that $$\lim_{n \to \infty}\|Tx_n\|=\|T\|,~\lim_{n \to \infty}\|Bx_n\|=\|B\|$$ and $$\lim_{n\to\infty}\|Tx_n-B x_n\|=\lim_{n\to\infty}\|Tx_n\|+\lim_{n\to\infty}\|-Bx_n\|.$$ Now, by Lemma \ref{lemma:sequence}, without loss of generality we may assume that for each $n \in \mathbb{N}$ there exists $f_n \in S_{\mathbb{Y}^*}$ such that  $$\lim_{n\to\infty}f_n(Tx_n)=\lim_{n\to\infty}\|Tx_n\|=\|T\|=1$$ and $$\lim_{n\to\infty}f_n(-Bx_n)=\lim_{n\to\infty}\|-Bx_n\|=\|B\|=1.$$ Clearly, $f_n(Tx_n)=r_n$ and $f_n(-Bx_n)=s_n$, where $|r_n| \leq 1,~|s_n| \leq 1$, $r_n \longrightarrow 1$ and $s_n \longrightarrow 1$. Therefore, $f_n(Tx_n+Bx_n)=r_n-s_n,$ which converges to $0$ as $n \to\infty$. If $\{Ax_n\}$ has a subsequence converging to $0$ then we are done. So assume that $\{Ax_n\}$ has no subsequence converging to $0$. Without loss of generality we may assume that $\inf_{n} \|Ax_n\|=c>0$. Now, it is easy to observe that $1> 1- (|r_n|-\frac{2}{c|\mu|}|r_n-s_n|)^2\geq 0$, since $|r_n| \to 1,~|r_n|\leq 1$ and $|r_n-s_n| \to 0$. Let for each $n\in\mathbb{N}$,  $\epsilon_n^2=1-(|r_n|-\frac{2}{c|\mu|}|r_n-s_n|)^2$. Then clearly $\epsilon_n \in [0,1)$ for each $n\in \mathbb{N}$ and $\epsilon_n \rightarrow 0$. We show that $Tx_n \perp_D^{\epsilon_n}Ax_n$. First, let $|\lambda|\geq \frac{2}{c}\|Tx_n\|\geq \frac{2}{\|Ax_n\|}\|Tx_n\|$. Then $\|Tx_n+\lambda Ax_n\|\geq |\lambda|\|Ax_n\|-\|Tx_n\|\geq \|Tx_n\|\geq \sqrt{1-\epsilon_n^2}\|Tx_n\|$. Let $|\lambda|<\frac{2}{c}\|Tx_n\|\leq \frac{2}{c}$. Then
	\begin{eqnarray*}
	\|Tx_n+ \lambda Ax_n\|&=&\|Tx_n-\frac{\lambda}{\mu}(Tx_n+Bx_n)\| \\
	&\geq & |f_n\Big (Tx_n-\frac{\lambda}{\mu}(Tx_n+Bx_n)\Big)|\\
	&=&|r_n-\frac{\lambda}{\mu}(r_n-s_n)|\\
	&\geq& |r_n|-\frac{|\lambda|}{|\mu|}|r_n-s_n|\\
	&> & |r_n|-\frac{2}{c|\mu|}|r_n-s_n|\\
	&=& \sqrt{1-\epsilon_n^2}\\
	&\geq& \sqrt{1-\epsilon_n^2}\|Tx_n\|.
	\end{eqnarray*}
	  Hence $Tx_n \perp_D^{\epsilon_n}Ax_n$. This completes the proof of the theorem.
\end{proof}

In the following example we show that the conditions given in Theorem \ref{theorem:boundedortho} are not sufficient to ensure that $T\perp_B A$ but $T\not\perp_{SB}A$.
\begin{example}
Let $\mathbb{X}=\ell_{\infty}(\mathbb{R})$. Define  linear operators $T,A:\ell_{\infty}\longrightarrow \ell_{\infty}$ as follows: 
\begin{eqnarray*}
T(x_1,x_2,x_3,\ldots)& = & (x_1,x_1,x_1,\ldots)\\
A(x_1,x_2,x_3,\ldots)& = & (-x_2,x_2,x_2,\ldots)
\end{eqnarray*}

  Then it is easy to check that $T,A \in \mathbb{B}(\ell_{\infty})$ with  $\|T\|= \|A\|= 1.$ 
	Clearly for  $\lambda \neq 0$, $\|T+\lambda A\|\geq \|(T+\lambda A)(1,1,0,0,0,\ldots)\|=\|(1-\lambda,1+\lambda,1+\lambda,\ldots)\|=1+|\lambda|>1=\|T\|$. Therefore, $T\perp_{SB}A$. But $(1,0,0,\ldots)\in M_T$ and choosing $ y_n = (1,0,0,\ldots) $ we get $\|T{{y_n}}\|\rightarrow \|T\|,~\|A{{y_n}}\|\rightarrow 0.$ \\
	Similarly, defining $B:\ell_{\infty}\longrightarrow \ell_{\infty}$ by $B(x_1,x_2,x_3,\ldots) =  (-x_1,x_1,x_1,\ldots), $ we can check that $T\perp_{SB}A$ although there exists ${y_n}=(1,0,0,\ldots) \in M_T,~ \epsilon_n=0$ such that $\|T{{y_n}}\|\rightarrow \|T\|$ and $T{{y_n}}\perp_D^{\epsilon_n}B{{y_n}}$. 
\end{example}

Now, we turn our attention to the newly introduced notion of semi-rotundity of a normed linear space. As mentioned earlier, every exposed point (strictly convex space)  is a semi-rotund point ( semi-rotund space)  but not conversely, if dimension of the space is greater than two. The notions are equivalent if the dimension of the space is two.
In this context, we first prove the following proposition which states that  every isometry defined between finite-dimensional Banach spaces is a semi-rotund point in the operator space.
\begin{prop}
Let $\mathbb{X},\mathbb{Y}$ be finite-dimensional Banach spaces. Then every isometry from $\mathbb{X}$ to $\mathbb{Y}$ is a semi-rotund point in $ \mathbb{B}(\mathbb{X},\mathbb{Y}). $
\end{prop}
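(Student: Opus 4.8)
The plan is to exhibit, for an arbitrary isometry $U\in\mathbb{B}(\mathbb{X},\mathbb{Y})$, a single operator $A$ with $U\perp_{SB}A$, which by definition of a semi-rotund point is exactly what we must produce. The structural fact I would lean on is that an isometry attains its norm at every unit vector, so $\|U\|=1$ and $M_U=S_{\mathbb{X}}$; this abundance of norm-attaining vectors is what gives us the freedom to force the norm of $U+\lambda A$ to increase for \emph{every} $\lambda\neq 0$. I would assume $\dim\mathbb{X}\geq 2$, since the construction below requires two linearly independent unit vectors.

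First I would fix linearly independent $x_1,x_2\in S_{\mathbb{X}}$ and define $A\in\mathbb{B}(\mathbb{X},\mathbb{Y})$ by $Ax_1=Ux_1$, $Ax_2=-Ux_2$, and $A=0$ on some algebraic complement of $\mathrm{span}\{x_1,x_2\}$; in finite dimensions this determines a bounded operator. The point of the two opposite signs is that $x_1,x_2$ become eigenvectors of $U+\lambda A$: indeed $(U+\lambda A)x_1=(1+\lambda)Ux_1$ and $(U+\lambda A)x_2=(1-\lambda)Ux_2$, where $Ux_1,Ux_2\in S_{\mathbb{Y}}$. Consequently, testing the operator norm on these two vectors gives $\|U+\lambda A\|\geq\max\{|1+\lambda|,|1-\lambda|\}$ for every scalar $\lambda$. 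Equivalently, writing $A=UT$ with $Tx_1=x_1$, $Tx_2=-x_2$, one reduces the whole statement to showing that the identity operator is a semi-rotund point of $\mathbb{B}(\mathbb{X})$, since $\|U+\lambda A\|=\|U(I+\lambda T)\|=\|I+\lambda T\|$ by isometry of $U$.

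It then remains to check that $\max\{|1+\lambda|,|1-\lambda|\}>1$ for every $\lambda\in K\setminus\{0\}$. This is where the real and complex cases are handled uniformly: from the identity $|1+\lambda|^2+|1-\lambda|^2=2+2|\lambda|^2>2$ it follows that at least one of $|1\pm\lambda|$ exceeds $1$. Hence $\|U+\lambda A\|>1=\|U\|$ for all $\lambda\neq 0$, i.e. $U\perp_{SB}A$, so $U$ is a semi-rotund point of $\mathbb{B}(\mathbb{X},\mathbb{Y})$. The only genuine obstacle is that a single test vector (or a single supporting functional) can rule out only one sign of $\lambda$ at a time; the device of using two independent directions with eigenvalues $1+\lambda$ and $1-\lambda$ is precisely what lets the two inequalities together cover all nonzero $\lambda$ at once, including the complex ones.
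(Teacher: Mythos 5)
Your proof is correct, and it takes a genuinely different route from the paper's. The paper's argument is pointwise and geometric: it observes that an isometry $T$ between finite-dimensional spaces is invertible, picks an exposed point $y$ of $B_{\mathbb{Y}}$, uses the fact that every exposed point is a semi-rotund point to find $z$ with $y\perp_{SB}z$, sets $x=T^{-1}y\in M_T$, and defines a rank-one $A$ by $Ax=z$ and $A=0$ on a hyperspace $H$ with $x\perp_B H$, so that $\|T+\lambda A\|\geq\|Tx+\lambda z\|>\|T\|$ for all $\lambda\neq 0$. You instead exploit the fact that an isometry attains its norm everywhere ($M_U=S_{\mathbb{X}}$): two independent unit vectors become eigenvectors of $U+\lambda A$ with eigenvalues $1+\lambda$ and $1-\lambda$, and the scalar identity $|1+\lambda|^2+|1-\lambda|^2=2+2|\lambda|^2$ rules out all nonzero $\lambda$, real or complex, at once. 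Your route is more elementary (it avoids exposed points, whose existence in finite dimensions is a nontrivial geometric fact) and strictly more general: the paper's ``Clearly, $T$ is invertible'' tacitly assumes the isometry is surjective, i.e.\ $\dim\mathbb{X}=\dim\mathbb{Y}$, whereas your rank-two $A$ works for any into-isometry and does not even use finite-dimensionality of $\mathbb{Y}$. What the paper's construction buys instead is a template that localizes everything at a single norm-attaining point, which is exactly the argument reused in Theorem \ref{theorem:semi-rotund} for compact operators, where only one point of $M_T$ is available. Finally, your explicit restriction to $\dim\mathbb{X}\geq 2$ is not a gap but a necessity of the statement itself: if $\dim\mathbb{X}=1$ the proposition can fail (in the surjective case the operator space is one-dimensional and contains no semi-rotund points at all, and the into-isometry $t\mapsto t(1,0)$ from $\mathbb{R}$ to $\ell_{\infty}^{2}$ is likewise not a semi-rotund point), and the paper's proof also implicitly needs dimension at least two for the vector $z$ and the hyperspace $H$ to be nonzero.
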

\begin{proof}
Let $T\in \mathbb{B}(\mathbb{X},\mathbb{Y})$ be an isometry, where $\mathbb{X},~\mathbb{Y}$ are finite-dimensional Banach spaces. Clearly, $T$ is invertible. Let $y$ be an exposed point of $\mathbb{B}_{\mathbb{Y}}$. Then there exists $x\in S_{\mathbb{X}}$ such that  $Tx= y.$ Since $y$ is an exposed point of $\mathbb{B}(\mathbb{Y})$, there exists $z\in \mathbb{Y}$ such that $y\perp_{SB}z$. Define $A\in \mathbb{B}(\mathbb{X},\mathbb{Y})$ by $Ax=z$ and $Aw=0$ for all $w\in H,$ where $H$ is a hyperspace such that $x \bot_B H.$  Then $Tx\perp_{SB}Ax$. We also observe that $x\in M_T$, since $T$ is an isometry. Therefore, $T\perp_{SB}A$. Hence, $T$ is a semi-rotund point of $ \mathbb{B}(\mathbb{X},\mathbb{Y}). $
\end{proof}  

In the following theorem, we prove that every non-zero compact linear operator from a reflexive Banach space to a strictly convex Banach space is semi-rotund.

\begin{theorem}\label{theorem:semi-rotund}
	Let $\mathbb{X}$ be a reflexive Banach space and $\mathbb{Y}$ be a strictly convex Banach space. Then every non-zero $T\in \mathbb{K}(\mathbb{X},\mathbb{Y})$ is a semi-rotund point.
\end{theorem}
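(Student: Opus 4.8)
The plan is to reduce strong Birkhoff--James orthogonality of operators to the same notion in the codomain, by controlling $T+\lambda A$ at a single, cleverly chosen norm-attaining vector. First I would use that $\mathbb{X}$ is reflexive and $T$ is compact to guarantee that $T$ attains its norm: there is $x_0\in M_T$ with $\|Tx_0\|=\|T\|$, and $Tx_0\neq 0$ since $T\neq 0$.

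Next I would find a target vector in $\mathbb{Y}$. As already observed in the introduction, a strictly convex space is a semi-rotund space; hence the nonzero vector $Tx_0$ is a semi-rotund point of $\mathbb{Y}$, so there exists $z\in\mathbb{Y}$ with $Tx_0\perp_{SB}z$. Concretely, taking a supporting functional $g\in S_{\mathbb{Y}^*}$ with $g(Tx_0)=\|Tx_0\|$ and any nonzero $z\in\ker g$ yields $Tx_0\perp_B z$ by \cite[Th. 2.1]{J}, and strict convexity of $\mathbb{Y}$ upgrades this to $Tx_0\perp_{SB}z$: otherwise the convex function $\lambda\mapsto\|Tx_0+\lambda z\|$ would attain its minimum value $\|Tx_0\|$ at two distinct points, forcing a nondegenerate segment onto a sphere.

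Finally I would lift $z$ to a compact operator. Choosing $f\in\mathbb{X}^*$ with $f(x_0)=1$ by Hahn--Banach and defining $Ax=f(x)z$ for $x\in\mathbb{X}$ gives a rank-one, hence compact, operator $A\in\mathbb{K}(\mathbb{X},\mathbb{Y})$ with $Ax_0=z$ and $A\neq 0$. Then for every $\lambda\in K\setminus\{0\}$,
\[
\|T+\lambda A\|\geq\|(T+\lambda A)x_0\|=\|Tx_0+\lambda z\|>\|Tx_0\|=\|T\|,
\]
where the strict inequality is exactly $Tx_0\perp_{SB}z$. Thus $T\perp_{SB}A$, and $T$ is a semi-rotund point of $\mathbb{K}(\mathbb{X},\mathbb{Y})$.

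The norm-attainment step and the rank-one construction are routine; the crux is the upgrade from $\perp_B$ to $\perp_{SB}$ inside $\mathbb{Y}$, which is precisely where strict convexity is indispensable and is what makes the single-vector estimate above strict for all $\lambda\neq 0$. I would also keep in mind the harmless standing assumption $\dim\mathbb{Y}\geq 2$, needed so that $\ker g$ contains a nonzero $z$; the one-dimensional codomain is a degenerate case admitting no strongly orthogonal companion.
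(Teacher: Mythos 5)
Your proposal is correct and follows essentially the same route as the paper: norm attainment via reflexivity plus compactness, a vector $z$ with $Tx_0\perp_B z$ upgraded to $Tx_0\perp_{SB}z$ by strict convexity of $\mathbb{Y}$, and a rank-one compact lift $A$ (your $Ax=f(x)z$ with $f(x_0)=1$ is the paper's $A$ defined by $Ax_0=z$, $A=0$ on a hyperspace $H$ with $x_0\perp_B H$), concluded by the same single-vector estimate $\|T+\lambda A\|\geq\|Tx_0+\lambda z\|>\|T\|$. You in fact supply two details the paper leaves implicit --- the convexity argument for the upgrade from $\perp_B$ to $\perp_{SB}$, and the observation that $\dim\mathbb{Y}\geq 2$ is needed for a nonzero $z\in\ker g$ to exist --- both of which are correct and sharpen the paper's proof.
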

\begin{proof}
	Since  $\mathbb{X}$ is reflexive and $T$ is compact, $T$ attains its norm. Let $x\in M_T$. By \cite[Cor. 2.2]{J}, there exists $y \in S_{\mathbb{Y}}$ such that $Tx \perp_B y$. Now, since $\mathbb{Y}$ is strictly convex and $ T $ is non-zero, we must have, $Tx\perp_{SB}y$. Define a linear operator $A$ from $\mathbb{X}$ to $\mathbb{Y}$ by $Ax=y$ and $Az=0$ for all $z\in H,$ where $H$ is a hyperspace such that $x \bot_B H.$ Clearly, $A\in \mathbb{K}(\mathbb{X},\mathbb{Y})$. Now, for any $\lambda \in K \setminus\{0\}$, $\|T+\lambda A\|\geq \|(T+\lambda A)x\|=\|Tx+\lambda y\|>\|Tx\|=\|T\|$. Therefore, $T\perp_{SB}A$. This proves that $T$ is a semi-rotund point and completes the proof of the theorem. 
\end{proof}

Finally, using Theorem \ref{theorem:semi-rotund}, we obtain the following corollary that illustrates that semi-rotundity is a strictly weaker property compared to strict convexity.
\begin{corollary}
	Let $\mathbb{X},~\mathbb{Y}$ be finite-dimensional Banach spaces and in addition, let $\mathbb{Y}$ be strictly convex. Then $\mathbb{B}(\mathbb{X},\mathbb{Y})$ is not strictly convex but semi-rotund.
\end{corollary}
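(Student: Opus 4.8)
The semi-rotundity half is essentially free, so I would dispose of it first. Since $\mathbb{X}$ is finite-dimensional it is reflexive, and since both $\mathbb{X},\mathbb{Y}$ are finite-dimensional we have $\mathbb{B}(\mathbb{X},\mathbb{Y})=\mathbb{K}(\mathbb{X},\mathbb{Y})$. Hence Theorem \ref{theorem:semi-rotund} applies verbatim to every non-zero $T\in\mathbb{B}(\mathbb{X},\mathbb{Y})$, giving that each such $T$ is a semi-rotund point; thus $\mathbb{B}(\mathbb{X},\mathbb{Y})$ is a semi-rotund space. All the real work therefore lies in showing that $\mathbb{B}(\mathbb{X},\mathbb{Y})$ is \emph{not} strictly convex, and for this the substantive case is $\dim\mathbb{X}\geq 2$ and $\dim\mathbb{Y}\geq 2$ (for $\dim\mathbb{X}=1$ the operator space is isometric to $\mathbb{Y}$, and the degenerate low-dimensional cases should be excluded or treated separately).

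To exhibit the failure of strict convexity I would invoke Theorem \ref{theorem:strictlyconvex}: it suffices to produce two linearly independent operators $T,A$ with $T\parallel A$. Since $\mathbb{X}$ is reflexive and $T,A$ are compact, Theorem \ref{theorem:compact} reduces this to finding a common norm attainment vector $x_0\in M_T\cap M_A$ with $Tx_0\parallel Ax_0$. The cleanest way to force $Tx_0\parallel Ax_0$ is to arrange $Tx_0=Ax_0$, since equal vectors are trivially parallel; and this choice is in fact essentially forced, because strict convexity of $\mathbb{Y}$ makes $\|\tfrac{u+v}{2}\|=1$ with $\|u\|,\|v\|\leq 1$ imply $u=v$. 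Writing $A=T+R$, the target is then a norm one operator $T$ attaining its norm at some $x_0$, together with a non-zero $R$ with $Rx_0=0$ and $\|T+R\|=1$; the resulting $A=T+R$ is norm one, linearly independent from $T$, attains its norm at $x_0$, and satisfies $Ax_0=Tx_0$.

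If $\mathbb{X}$ is not smooth the construction is immediate: I would pick a non-smooth point $x_0\in S_{\mathbb{X}}$ carrying two distinct supporting functionals $f_1\neq f_2$ in $S_{\mathbb{X}^*}$ with $f_1(x_0)=f_2(x_0)=1$, fix $y_0\in S_{\mathbb{Y}}$, and set $T=f_1(\cdot)y_0$, $A=f_2(\cdot)y_0$. These are linearly independent, both of norm one, and $Tx_0=Ax_0=y_0$, so Theorem \ref{theorem:compact} gives $T\parallel A$ and we are done. The genuinely hard regime is $\mathbb{X}$ \emph{smooth} (with $\mathbb{Y}$ strictly convex), where every norm one rank-one operator turns out to be an extreme point of the ball, so any witnessing pair must have rank at least two. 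Here I would take an exposed point $x_0\in S_{\mathbb{X}}$, exposed by $f\in S_{\mathbb{X}^*}$, set $T=f(\cdot)y_0$ for an exposed $y_0\in S_{\mathbb{Y}}$, and perturb by $R_t=t\,g(\cdot)w$, where $g\in\mathbb{X}^*$ with $g(x_0)=0$ and $w$ is a tangential direction at $y_0$, both available since $\dim\mathbb{X},\dim\mathbb{Y}\geq 2$.

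I expect the only delicate point — and the main obstacle — to be the norm control, namely verifying $\|T+R_t\|=1$ for small $t\neq 0$. The difficulty is concentrated near $x_0$: since $R_t$ is linear it cannot keep the image strictly inside $B_{\mathbb{Y}}$, so $\|(T+R_t)x\|=\|f(x)y_0+t\,g(x)w\|$ threatens to exceed $1$ for $x$ close to $x_0$. The resolution should come from strict convexity of $\mathbb{Y}$, equivalently the strict curving-in of $S_{\mathbb{Y}}$ at the exposed point $y_0$, played against the strict curving-in of $S_{\mathbb{X}}$ at $x_0$: on the region of $S_{\mathbb{X}}$ away from $x_0$ compactness gives a uniform gap $\|Tx\|\leq 1-\delta$, which absorbs the small tangential perturbation, while near $x_0$ one must show that the second-order concavity of the boundary of $\{(f(x),g(x)):x\in B_{\mathbb{X}}\}$ at $(1,0)$ dominates the quadratic growth of $|t\,g(x)|$. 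Establishing this uniformly, and hence that the supremum defining $\|T+R_t\|$ stays equal to $1$ and is still attained at $x_0$, is the crux; the finite-dimensionality (compactness of $S_{\mathbb{X}}$) is precisely what should make the estimate tractable.
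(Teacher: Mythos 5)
Your semi-rotund half coincides with the paper's proof (finite dimension gives reflexivity and $\mathbb{B}(\mathbb{X},\mathbb{Y})=\mathbb{K}(\mathbb{X},\mathbb{Y})$, then Theorem \ref{theorem:semi-rotund}), and your non-smooth case is correct. The gap is the smooth case, which you explicitly leave open, and it is not mere bookkeeping: with your quantifier order (fix an exposed $x_0$ and $y_0$ first, then perturb $T=f(\cdot)y_0$ tangentially) the needed norm control can genuinely \emph{fail}. Take $\mathbb{X}=\ell_4^2$, $\mathbb{Y}=\ell_2^2$, $x_0=e_1$ (an exposed point, since $\ell_4^2$ is strictly convex) and, after rotating $\mathbb{Y}$, $y_0=e_1$. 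Any $R$ with $Rx_0=0$ has matrix $[\,0\,|\,v\,]$ with $v=(a,b)$, and along the unit vectors $x(t)=((1-t^4)^{1/4},t)$ one gets $\|(T+R)x(t)\|_2^2=1+2at+(a^2+b^2)t^2-\tfrac{1}{2}t^4+O(t^5)$, which exceeds $1$ for small $t$ of a suitable sign unless $v=0$. So at this exposed point \emph{no} nonzero perturbation keeps the norm at $1$: $S_{\ell_4^2}$ is fourth-order flat at $e_1$ while $S_{\ell_2^2}$ curves quadratically, exactly the loss you feared. Exposedness is therefore not the right hypothesis; you would need $x_0$ with a quantitative quadratic curvature lower bound (e.g., the contact point of the smallest circumscribed Euclidean ball) plus a uniform estimate — real extra work your proposal does not carry out.

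The paper avoids all of this by not prescribing the common norm-attainment point: it takes the rank-one $T=f(\cdot)x$ with $Tx=x$, $T|_H=0$ (where $x\perp_B H$), and pairs it with the identity, so $\|T+I\|\geq\|(T+I)x\|=2=\|T\|+\|I\|$ with $T,I$ linearly independent — two lines, no smoothness split, no curvature. (As you in effect noticed, the paper's argument tacitly needs $\mathbb{X}=\mathbb{Y}$ for $I$ and $Tx=x$ to make sense; for $\mathbb{X}\neq\mathbb{Y}$ with both dimensions at least $2$, replace $I$ by any norm-one $S$ of rank $\geq 2$, pick $x_0\in M_S$, and set $T=f(\cdot)Sx_0$: then $\|T\|=\|S\|=1$, $Tx_0=Sx_0$, hence $\|T+S\|=2$, and $T\neq S$ since the ranks differ. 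This also vindicates your correct observation that the degenerate cases $\dim\mathbb{X}=1$ or $\dim\mathbb{Y}=1$ must be excluded, since there $\mathbb{B}(\mathbb{X},\mathbb{Y})$ is isometric to $\mathbb{Y}$ or $\mathbb{X}^*$.) The moral: your target identity $Tx_0=Ax_0$ is the right one, but choose the higher-rank operator first and let it dictate $x_0$, rather than fixing $x_0$ and fighting the geometry of $S_{\mathbb{X}}$ at a prescribed point.
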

\begin{proof}
	Let $x\in S_{\mathbb{X}}$. Define  $T\in \mathbb{B}(\mathbb{X},\mathbb{Y})$ by $Tx=x$ and $Ty=0$ for all $y\in H,$ where $H$ is a hyperspace such that $x \bot_B H.$  Then clearly $\|T\|=1$. Now, $2=\|I\|+\|T\| \geq \|I+T\|\geq \|Ix+Tx\|=2$. This shows that $ \|T\| + \|I\| = \|T + I \|,$ although $T$ and $I$ are linearly independent. Therefore, $\mathbb{B}(\mathbb{X},\mathbb{Y})$ is not strictly convex. Now, being finite-dimensional, $\mathbb{X}$ is reflexive. Since $\mathbb{Y}$ is finite-dimensional, it follows that $\mathbb{B}(\mathbb{X},\mathbb{Y})=\mathbb{K}(\mathbb{X},\mathbb{Y})$. Therefore, by Theorem \ref{theorem:semi-rotund}, we have, every $T\in \mathbb{B}(\mathbb{X},\mathbb{Y})$ is a semi-rotund point. This completes the proof of the corollary.   
\end{proof}

\bibliographystyle{amsplain}

\end{document}